\newcommand{\JJ}[2]{\left(\frac{\partial \tilde{x}^{#1}}{\partial x^{#2}}\right)}
\newcommand{\JD}[2]{\left(\frac{\partial x^{#1}}{\partial \tilde{x}^{#2}}\right)}
\newcommand{\FullChris}[3]{\frac{G^{#2 l}}{2}\left[ \frac{\partial G_{l #1}}{\partial x^{#3}} + \frac{\partial G_{l #3}}{\partial x^{#1}} - \frac{\partial G_{#1 #3}}{\partial x^l} \right]}
\newtheorem{theorem}{\bf Theorem}[section]
\newtheorem{remark}{\bf Remark}[section]
\newtheorem{definition}{\bf Definition}[section]
\numberwithin{equation}{section}
\title{Numerical Solution of Compressible Euler and Magnetohydrodynamic flow past an infinite cone}
\author[1]{Ian Holloway }
\author[2]{Sivaguru S. Sritharan}
\affil[1]{Department of Mathematics, Wright State University, Dayton, OH 45435\\ email: iancholloway@gmail.com}
\affil[2]{M.S. Ramaiah University of Applied Sciences, Bengaluru, India  \\ email: provostsritharan@gmail.com}
\begin{document}
\maketitle

\begin{abstract}
A numerical scheme is developed for systems of conservation laws on manifolds which arise in high speed aerodynamics and magneto-aerodynamics. The systems are presented in an arbitrary coordinate system on the manifold and involve source terms which account for the curvature of the domain. In order for a numerical method to accurately capture the behavior of the system it is solving, the equations must be discretized in a way that is not only consistent in value, but also models the appropriate character of the system. Such a discretization is presented in this work which preserves the tensorial transformation relationships involved in formulating equations in a curved space. A numerical method is then developed and applied to the conical Euler and Ideal Magnetohydrodynamic equations. To the author's knowledge, this is the first demonstration of a numerical solver for the conical Ideal MHD equations.
\end{abstract}

\section{Introduction}

It many situations in modeling and analysis, it is helpful to use a coordinate system other than the standard Cartesian system. While a Cartesian system has many desirable properties, it is sometimes more beneficial to have a coordinate system which is better tuned to the character of the problem. If a problem has an identifiable symmetry to it, such as cylindrical, or spherical, or otherwise, then it is often possible to simplify or even eliminate one or more dimensions of the problem. Such reductions in the complexity of problems ease analysis and accelerate the acquisition of numerical solutions. This is the precise motivation for the works in \cite{ConEuler} and \cite{ConMHD} which eliminate the radial dimension of supersonic flow problems governed by the Euler and Ideal Magnetohydrodynamic (MHD) equations respectively to acquire the conical versions of those equations. The Conical Euler equations:

\begin{subequations}\label{EulerCon}
\begin{gather}
\left(\rho V^\beta\right)_{|\beta} = 0 \label{mass} \\
\left(\rho V^i V^\beta + G^{i\beta}P\right)_{|\beta} = 0 \label{mom} \\
\left( \left[\rho E+P\right] V^\beta \right)_{|\beta} =0 \label{energy}
\end{gather}
\end{subequations}

and the Conical MHD Equations:

\begin{subequations}\label{MHDCon}
\begin{gather}
    \left(\rho V^\beta\right)_{|\beta} = 0 \label{mass_mhd} \\
   \left(\rho V^iV^\beta - \frac{1}{\mu}B^iB^\beta + G^{i\beta}\left(P + \frac{|\pmb{B}|^2}{2\mu}\right)\right)_{|\beta} = -\frac{1}{\mu}B^iB^\beta_{|\beta} \label{mom_mhd}
    \\
    \left( \left(\rho E+P+\frac{|\pmb{B}|^2}{\mu}\right)V^\beta - \frac{1}{\mu}(\pmb{V}\cdot\pmb{B})B^\beta \right)_{|\beta} = -\frac{1}{\mu}(\pmb{V}\cdot\pmb{B})B^\beta_{|\beta} \label{energy_mhd}
    \\
    (V^\beta B^i-V^iB^\beta)_{|\beta} = -V^iB^\beta_{|\beta} \label{mag_mhd}
\end{gather}
\end{subequations}

are defined on the surface of a sphere which is two dimensional, but curved. The unique character of the systems made them incompatible with basic numerical methods. Numerical methods have been developed for the conical Euler equations subject to the assumption of irrotational flow in \cite{Guan} and \cite{SriFAMethod} with success. These however did not so easily extend to the more general conical equations. Furthermore there are no known efforts to solve the conical MHD equations numerically. Therefore it was fitting to develop a new method designed to handle the challenge of solving a fluid flow problem on a curved manifold.

The curvature of the surface was accounted for using tensor calculus which provides tools that can systematically transform equations between coordinate systems. In fact it allows equations to be put in general form, not referencing any particular coordinate system, and thus appropriate to be adapted to any coordinate system. As an example, in numerical simulations of gas flows past bodies, it is convenient for the coordinate system to conform to the contour of the object in the flow. With a coordinate free formulation of the governing equations, the problem is abstractly the same regardless of the exact shape of the object around which the gas is flowing.

To use such formulations in practice though, appropriate numerical methods must be developed which accurately accommodate the non-uniformity of the coordinate lines. A key component of ensuring a numerical method does this is deriving discrete source terms analogous to the Christoffel symbols which show up in expressions involving derivatives with respect to curved coordinate lines. It is important in numerics for source terms to not only be consistent in the limit of zero mesh spacing, but also to have a behavior which is consistent with the continuous case even with a finite mesh spacing \cite{balbas,BalancedNT,jin_2001,kurganov2007}. Work has been done on fluid flow problems on manifolds such as in \cite{Man_Lev} in which the geometric terms were consistent in the limit of zero mesh spacing but did not truly capture the tensorial nature of the problems, and thus did not perfectly captured steady state solutions. Work has also been done to develop appropriate source terms in applications such as shallow water and chemically reacting flows which capture behavior and steady solutions in addition to being consistent in value. However, so far work has not been done which both addresses a fluid flow problem on a general curved manifold \textit{and} derives the geometric source terms in such a way as to preserve the tensorial nature of the problem and thus accurately capture steady state solutions. In this work we demonstrate how to derive such source terms for a large class of discrete differential operators and manifolds. We then develop a numerical method involving these source terms to solve the conical Euler and MHD equations on the surface of a sphere.

In section \ref{sec:CD} we introduce the covariant derivative in a curved coordinate system. In the following section we develop a discrete analog of the covariant derivative by deriving source terms which correctly account for the curvature of the coordinate system. An example of how these can be applied to a modern central scheme is presented in section \ref{sec:CS}. The conical Euler and MHD equations are introduced in section \ref{sec:CF}, and a numerical method to solve them is developed in sections \ref{sec:Mesh}, \ref{sec:BC}, \ref{sec:Disc}, and \ref{sec:SP}. Numerical results produced by this method are presented and discussed in section \ref{sec:RD}.

\section{Covariant Derivative}\label{sec:CD}

We restrict ourselves to the case of a Riemannian manifold. This restriction allows us to define a real vector basis on the manifold which refers back to a Cartesian coordinate system. This vector basis is the Jacobian matrix of the coordinate transformation between the Cartesian system and the system in which the problem is formulated. While such a restriction is not universally applicable, it does apply to a wide variety of current research areas. It mainly only breaks down in relativistic applications. Furthermore, this treatment of tensor calculus is simpler and highlights the use of tools from calculus and linear algebra.

In a curved coordinate system, the basis for vectors and tensors is no longer uniform. Thus it is possible for the components of a vector to change, but for the vector to remain the same, and conversely for the vector to change, but the components to remain the same. The covariant derivative (denoted $(\cdot)_{|i}$ for differentiation in the $i^{th}$ coordinate direction) accounts for this. If a vector does not change in a given direction, then the covariant derivative in that direction will be zero, even if the components are changing.

The covariant derivative is the foundation of different kinds of derivatives which are seen in practice such as the gradient, divergence, curl, and Laplacian. It is thus the case that if an appropriate discrete form of the covariant derivative can be derived, then expressions for a wide variety of operators will naturally follow. In order to derive a discrete form, the mathematical character of the covariant derivative must be understood.

Consider a $d$-dimensional Euclidean space spanned by two coordinate systems, a Cartesian system ($\tilde{X}$) with coordinates $\tilde{x}^i$ for $i\in \{1,2,3,...,d\}$, and another curved system ($X$) with coordinates $x^i$ for $i\in \{1,2,3,...,d\}$. The Jacobian matrix is defined at every point, given by $\JJ{i}{j}$ and provides the basis for vectors and tensors. Thus:

\begin{equation}
 \forall u^j \in X \text{, we have } \JJ{i}{j}u^j=\tilde{u}^i\in\tilde{X}     
\end{equation}

and

\begin{equation}
 \forall w^{jk} \in X \text{, we have } \JJ{i}{j}\JJ{h}{k}w^{jk}=\tilde{w}^{ih}\in\tilde{X}     
\end{equation}

and so on. 

Likewise, there is at every point a dual basis, $\JD{i}{j}$, which acts as a basis for derivatives. That is:

\begin{equation}
    \JD{i}{j}\frac{\partial }{\partial x^i} = \frac{\partial }{\partial \tilde{x}^j}
\end{equation}

We would like for derivatives of tensors to transform in the same manner. However in general:

\begin{equation}
    \JD{i}{j}\JJ{k}{l}\frac{\partial u^l}{\partial x^i}\neq\frac{\partial \tilde{u}^k}{\partial \tilde{x}^j}
\end{equation}

So instead the covariant derivative must be used. Examples of covariant derivatives of tensors of various orders are given here:

\begin{equation}
    (f)_{|i} = \frac{\partial f}{\partial x^i}
\end{equation}

\begin{equation}
    (u^j)_{|i} = \frac{\partial u^j}{\partial x^i} + \Gamma\indices{_i^j_k}u^k
\end{equation}

\begin{equation}
    (w^{jk})_{|i} = \frac{\partial w^{jk}}{\partial x^i} + \Gamma\indices{_i^j_l}w^{lk} + \Gamma\indices{_i^k_l}w^{jl}
\end{equation}

These satisfy the transformation relationships:

\begin{equation}\label{eq:CDtransr1}
    \JD{i}{l}\JJ{m}{j}\left[\frac{\partial u^j}{\partial x^i} + \Gamma\indices{_i^j_k}u^k\right]=\frac{\partial \tilde{u}^m}{\partial \tilde{x}^l} + \tilde{\Gamma}\indices{_l^m_k}\tilde{u}^k
\end{equation}

and

\begin{multline}\label{eq:CDtransr2}
    \JD{i}{p}\JJ{m}{j}\JJ{n}{k}\left[\frac{\partial w^{jk}}{\partial x^i} + \Gamma\indices{_i^j_l}w^{lk} + \Gamma\indices{_i^k_l}w^{jl}\right] \\ =\frac{\partial \tilde{w}^{mn}}{\partial \tilde{x}^p} + \tilde{\Gamma}\indices{_p^m_l}\tilde{w}^{ln} + \tilde{\Gamma}\indices{_p^n_l}\tilde{w}^{ml}
\end{multline}

and so on. The Christoffel symbol, $\Gamma$, is defined by the metric tensor:

\begin{equation}\label{ChrisDef}
    \Gamma\indices{_k^j_i} = \Gamma\indices{_i^j_k} = \FullChris{i}{j}{k}
\end{equation}

The metric tensor, which defines length and angle in the curved system is given by:

\begin{equation}\label{metricDef}
    G_{ij} = \JJ{h}{i}\JJ{h}{j}
\end{equation}

and its inverse by:

\begin{equation}\label{metricInvDef}
    G^{ki} = \JJ{k}{l}^{-1}\JJ{i}{l}^{-1}
\end{equation}

Plugging equations \ref{metricDef} and \ref{metricInvDef} into \ref{ChrisDef} gives:

\begin{multline}
    \Gamma\indices{_k^j_i} = \frac{1}{2}\JJ{j}{n}^{-1}\JJ{l}{n}^{-1}\bigg[ \frac{\partial }{\partial x^k}\left(\JJ{h}{l}\JJ{h}{i}\right) \\ + \frac{\partial }{\partial x^i}\left(\JJ{h}{l}\JJ{h}{k}\right) - \frac{\partial }{\partial x^l}\left(\JJ{h}{i}\JJ{h}{k}\right) \bigg]
\end{multline}

\begin{multline}
    = \frac{1}{2}\JJ{j}{n}^{-1}\JJ{l}{n}^{-1}\bigg[ \JJ{h}{i}\frac{\partial }{\partial x^k}\JJ{h}{l} + \JJ{h}{l}\frac{\partial }{\partial x^k}\JJ{h}{i} \\ + \JJ{h}{k}\frac{\partial }{\partial x^i}\JJ{h}{l} + \JJ{h}{l}\frac{\partial }{\partial x^i}\JJ{h}{k} \\ - \JJ{h}{k}\frac{\partial }{\partial x^l}\JJ{h}{i} - \JJ{h}{i}\frac{\partial }{\partial x^l}\JJ{h}{k} \bigg]
\end{multline}

by switching the order of some of the derivatives, then combining and canceling terms, we get:

\begin{equation}
    = \frac{1}{2}\JJ{j}{n}^{-1}\JJ{l}{n}^{-1}\bigg[ 2\JJ{h}{l}\frac{\partial }{\partial x^k}\JJ{h}{i} \bigg]
\end{equation}

\begin{equation}
    = \JJ{j}{n}^{-1}\delta^h_n\frac{\partial }{\partial x^k}\JJ{h}{i}
\end{equation}

\begin{equation}
    = \JJ{j}{h}^{-1}\frac{\partial }{\partial x^k}\JJ{h}{i}
\end{equation}

A discrete formulation of the covariant derivative will have to have source terms which are consistent with this expression in the limit as mesh spacing goes to zero. As well as preserving the transformation relationships \eqref{eq:CDtransr1} and \eqref{eq:CDtransr2}.

\section{Discrete Formulation}\label{sec:DiscCD}

The discrete representation of the $d$-dimensional manifold would be a list of points, $\{(x^1_{,i},x^2_{,i},x^3_{,i},...,x^d_{,i})\}^N_{i=1}$. Where $N$ is the number of points in the mesh, and the mesh index is a subscript separated with a comma from indices for tensor components and indices referring to coordinate directions. It is also assumed that there are Jacobian matrices at each point in the mesh, $\left\{ \JJ{j}{k}_{,i} \right\}_{i=1}^N$. Discrete differential operators acting on a function defined on the mesh are denoted $D_i$ for differentiation in the $i^{th}$ coordinate direction. It is not assumed that there is another, Cartesian mesh, thus it causes no conflicts to define: $\tilde{D}_k\equiv \JD{j}{k}D_j$.

Deriving a consistent, discrete covariant derivative associated with a given discrete differential operator relies on the following theorem.

\begin{theorem}\label{sumOfDiffs}
    Let $\{u_i\}_1^N$ be collection of values. If a linear combination of those values, $\sum_{i=1}^N \phi_iu_i$, has the property that the coefficients $\{\phi_i\}_1^N$ sum to zero, then the linear combination can be written as a linear combination of differences of pairs of values in $\{u_i\}_1^N$.
\end{theorem}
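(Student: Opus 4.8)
The plan is to exploit the single scalar constraint $\sum_{i=1}^N \phi_i = 0$ to eliminate one of the coefficients, and then to reorganize the sum so that each surviving coefficient multiplies a difference of two of the $u_i$. A direct algebraic manipulation is cleaner here than an induction on $N$, though induction would work equally well.

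First I would single out a reference index, say $i=1$, and solve the constraint for its coefficient, $\phi_1 = -\sum_{i=2}^N \phi_i$. Substituting this back into the linear combination gives
\begin{equation}
\sum_{i=1}^N \phi_i u_i = \phi_1 u_1 + \sum_{i=2}^N \phi_i u_i = -\left(\sum_{i=2}^N \phi_i\right)u_1 + \sum_{i=2}^N \phi_i u_i.
\end{equation}
Next I would merge the two sums over the common range $i = 2,\dots,N$ and factor out each $\phi_i$, obtaining
\begin{equation}
\sum_{i=1}^N \phi_i u_i = \sum_{i=2}^N \phi_i (u_i - u_1).
\end{equation}
This displays the linear combination explicitly as a weighted sum of the differences $u_i - u_1$, each of which is a difference of a pair of values drawn from $\{u_i\}_1^N$, which is exactly the claim.

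There is no genuinely hard step: the only things worth remarking on are that the choice of reference index is immaterial, and that the argument uses no property of the $u_i$ beyond linearity, so it applies verbatim when the $u_i$ are vectors or tensor components rather than scalars — the case actually needed for the discrete covariant derivative. The nearest thing to an obstacle is conceptual rather than computational: the representation above privileges a single reference value $u_1$, whereas a particular discrete stencil $D_i$ may prefer differences between other pairs. I would therefore close by noting that for any indices $j,k$ one has $u_i - u_1 = (u_i - u_j) + (u_j - u_1)$, so the full family of pairwise differences spans the same space, and the representation can be re-expressed to match whatever difference pattern the operator $D_i$ supplies. This flexibility is what makes the theorem usable as the foundation for constructing the discrete source terms.
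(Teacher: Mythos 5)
Your proof is correct and follows essentially the same route as the paper: eliminating $\phi_1$ via the zero-sum constraint and rewriting the combination as $\sum_{i=2}^N \phi_i(u_i - u_1)$. Your closing remark about re-expressing the differences to match a given stencil is a nice observation the paper makes only informally, but the core argument is identical.
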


\begin{proof}
If we have:

\begin{equation}
    \sum_{i=1}^N \phi_i = 0
\end{equation}

then we have:

\begin{equation}
    \phi_1 = - \sum_{i=2}^N \phi_i
\end{equation}

Therefore:

\begin{equation}
    \sum_{i=1}^N \phi_iu_i = \phi_1u_1 + \sum_{i=2}^N \phi_iu_i
\end{equation}

\begin{equation}
    = -\left(\sum_{i=2}^N \phi_i\right)u_1 + \sum_{i=2}^N \phi_iu_i
\end{equation}

\begin{equation}
    = \sum_{i=2}^N \phi_i(u_i-u_1)
\end{equation}

which is a linear combination of differences of pairs of values in $\{u_i\}_1^N$.
\end{proof}

Many discrete differential operators have the property that the coefficients sum to zero. In fact, it is a requirement for standard finite difference approximations of derivatives. The theorem thus applies to a broad class of differential operators and allows Christoffel-like source terms to be derived for them.

We now consider a discrete differential operator which we would like to use to build a discrete covariant derivative. We assume that the discrete operator is consistent with true differentiation in the limit as mesh size goes to zero.; that is 

\begin{equation}
    \lim_{\Delta x\to 0}D_if = \frac{\partial f}{\partial x^i}
\end{equation}

And we assume that this operator has coefficients which sum to zero. Using Theorem \ref{sumOfDiffs} the operation will be written as a weighted sum of differences. It should be pointed out that the particular set of differences given in the proof of Theorem \ref{sumOfDiffs} is not necessarily the only way to write the operator as the sum of differences. It simply proves that there will always be at least one. Generally, for each coefficient, there will be an associated ``+'' index and ``-'' index. The difference associated with that coefficient is given by the ``+'' variable minus the ``-'' variable.

To come up with a discrete covariant derivative, we need to come up with an operator $CD_i$ such that:

\begin{equation}
    \tilde{D}_j\tilde{u}^k = \JJ{k}{l}\JD{i}{j}CD_iu^l
\end{equation}

We already have by definition that:

\begin{equation}
    \tilde{D}_j\tilde{u}^k = \JD{i}{j}D_i\tilde{u}^k
\end{equation}

thus we only need to acquire:

\begin{equation}
    D_i\tilde{u}^k = \JJ{k}{l}CD_iu^l
\end{equation}

We can write the derivative of the Cartesian components in the ``$s$'' direction at mesh index ``$c$'' as:

\begin{equation}
    D_s\tilde{u}^j_{,c} = \sum_{i=1}^N \phi_i(\tilde{u}^j_{,k^+_i} - \tilde{u}^j_{,k^-_i})
\end{equation}

\begin{equation}
    = \sum_{i=1}^N \phi_i(\tilde{u}^j_{,k^+_i} - \tilde{u}^j_{,c} + \tilde{u}^j_{,c} - \tilde{u}^j_{,k^-_i})
\end{equation}

\begin{equation}
    = \sum_{i=1}^N \phi_i\left( \JJ{j}{l}_{,k^+_i}u^l_{,k^+_i} - \JJ{j}{l}_{,c}u^l_{,c} + \JJ{j}{l}_{,c}u^l_{,c} - \JJ{j}{l}_{,k^-_i}u^l_{,k^-_i} \right)
\end{equation}

\begin{equation}
    = \sum_{i=1}^N \phi_i\left( \JJ{j}{l}_{,k^+_i}u^l_{,k^+_i} - \JJ{j}{l}_{,c}u^l_{,c} \right) + \sum_{i=1}^N \phi_i\left( \JJ{j}{l}_{,c}u^l_{,c} - \JJ{j}{l}_{,k^-_i}u^l_{,k^-_i} \right)
\end{equation}

\begin{multline}
    = \sum_{i=1}^N \phi_i\left( \left(\JJ{j}{l}_{,k^+_i} + \JJ{j}{l}_{,c}\right)u^l_{,k^+_i} - \JJ{j}{l}_{,c}\left(u^l_{,k^+_i} - u^l_{,c}\right) \right) \\
    + \sum_{i=1}^N \phi_i\left( \JJ{j}{l}_{,c}\left(u^l_{,c} - u^l_{,k^-_i}\right) + \left(\JJ{j}{l}_{,c} - \JJ{j}{l}_{,k^-_i}\right)u^l_{,k^-_i} \right)
\end{multline}

\begin{multline}
    = \JJ{j}{l}_{,c}\Bigg[ \sum_{i=1}^N \phi_i\left( \left(u^l_{,k^+_i} - u^l_{,c}\right) + \JJ{l}{m}_{,c}^{-1}\left(\JJ{m}{n}_{,k^+_i} - \JJ{m}{n}_{,c}\right)u^n_{,k^+_i} \right) \\
    + \sum_{i=1}^N \phi_i\left( \left(u^l_{,c} - u^l_{,k^-_i}\right) + \JJ{l}{m}_{,c}^{-1}\left(\JJ{m}{n}_{,c} - \JJ{m}{n}_{,k^-_i}\right)u^n_{,k^-_i} \right) \Bigg]
\end{multline}

\begin{multline}
    = \JJ{j}{l}_{,c} \sum_{i=1}^N \phi_i\Bigg[ \left(u^l_{,k^+_i} - u^l_{,k^-_i}\right) + \JJ{l}{m}_{,c}^{-1}\left(\JJ{m}{n}_{,k^+_i} - \JJ{m}{n}_{,c}\right)u^n_{,k^+_i} \\ + \JJ{l}{m}_{,c}^{-1}\left(\JJ{m}{n}_{,c} - \JJ{m}{n}_{,k^-_i}\right)u^n_{,k^-_i} \Bigg] 
\end{multline}

This suggests that the discrete covariant derivative corresponding to the discrete derivative is:

\begin{multline}
    CD_su^l = \sum_{i=1}^N \phi_i\Bigg[ \left(u^l_{,k^+_i} - u^l_{,k^-_i}\right) + \JJ{l}{m}_{,c}^{-1}\left(\JJ{m}{n}_{,k^+_i} - \JJ{m}{n}_{,c}\right)u^n_{,k^+_i} \\ + \JJ{l}{m}_{,c}^{-1}\left(\JJ{m}{n}_{,c} - \JJ{m}{n}_{,k^-_i}\right)u^n_{,k^-_i} \Bigg] 
\end{multline}


Furthermore it can be shown that with ``nice'' enough solution and manifold this expression is consistent with the true covariant derivative. Indeed:

\begin{multline*}
    \lim_{\Delta x\to 0} \sum_{i=1}^N \phi_i\Bigg[ \left(u^l_{,k^+_i} - u^l_{,k^-_i}\right) + \JJ{l}{m}_{,c}^{-1}\left(\JJ{m}{n}_{,k^+_i} - \JJ{m}{n}_{,c}\right)u^n_{,k^+_i} \\ + \JJ{l}{m}_{,c}^{-1}\left(\JJ{m}{n}_{,c} - \JJ{m}{n}_{,k^-_i}\right)u^n_{,k^-_i} \Bigg] 
\end{multline*}

\begin{multline}
    = \frac{\partial u^l}{\partial x^s} + \lim_{\Delta x\to 0} \sum_{i=1}^N \phi_i\Bigg[ \JJ{l}{m}_{,c}^{-1}\left(\JJ{m}{n}_{,k^+_i} - \JJ{m}{n}_{,c}\right)u^n_{,k^+_i} \\ + \JJ{l}{m}_{,c}^{-1}\left(\JJ{m}{n}_{,c} - \JJ{m}{n}_{,k^-_i}\right)u^n_{,k^-_i} \Bigg] 
\end{multline}

Assuming $u$ and the manifold are sufficiently bounded and smooth, then:

\begin{multline}
    = \frac{\partial u^l}{\partial x^s} + \JJ{l}{m}^{-1}u^n\lim_{\Delta x\to 0} \sum_{i=1}^N \phi_i\Bigg[ \left(\JJ{m}{n}_{,k^+_i} - \JJ{m}{n}_{,c}\right) \\ + \left(\JJ{m}{n}_{,c} - \JJ{m}{n}_{,k^-_i}\right) \Bigg] 
\end{multline}

\begin{equation}
    = \frac{\partial u^l}{\partial x^s} + \JJ{l}{m}^{-1}u^n\lim_{\Delta x\to 0} \sum_{i=1}^N \phi_i\Bigg[ \JJ{m}{n}_{,k^+_i} - \JJ{m}{n}_{,k^-_i} \Bigg] 
\end{equation}

\begin{equation}
    = \frac{\partial u^l}{\partial x^s} + \JJ{l}{m}^{-1}u^n \frac{\partial }{\partial x^s}\JJ{m}{n}
\end{equation}

\begin{equation}
    = \frac{\partial u^l}{\partial x^s} + \Gamma\indices{_s^l_n}u^n 
\end{equation}

Which is the covariant derivative of $u$.

For a rank 2 tensor, the derivation proceeds similarly.

\begin{equation}
    D_s\tilde{w}^{jh}_{,c} = \sum_{i=1}^N \phi_i(\tilde{w}^{jh}_{,k^+_i} - \tilde{w}^{jh}_{,k^-_i})
\end{equation}

\begin{equation}
    = \sum_{i=1}^N \phi_i(\tilde{w}^{jh}_{,k^+_i} - \tilde{w}^{jh}_{,c} + \tilde{w}^{jh}_{,c} - \tilde{w}^{jh}_{,k^-_i})
\end{equation}

\begin{multline}
    = \sum_{i=1}^N \phi_i\Bigg( \JJ{j}{l}_{,k^+_i}\JJ{h}{p}_{,k^+_i}w^{lp}_{,k^+_i} - \JJ{j}{l}_{,c}\JJ{h}{p}_{,c}w^{lp}_{,c} \\ + \JJ{j}{l}_{,c}\JJ{h}{p}_{,c}w^{lp}_{,c} - \JJ{j}{l}_{,k^-_i}\JJ{h}{p}_{,k^-_i}w^{lp}_{,k^-_i} \Bigg)
\end{multline}

\begin{multline}
    = \sum_{i=1}^N \phi_i\Bigg( \JJ{j}{l}_{,k^+_i}\JJ{h}{p}_{,k^+_i}w^{lp}_{,k^+_i} - \JJ{j}{l}_{,c}\JJ{h}{p}_{,c}w^{lp}_{,c} \Bigg) \\ + \sum_{i=1}^N \phi_i\Bigg(\JJ{j}{l}_{,c}\JJ{h}{p}_{,c}w^{lp}_{,c} - \JJ{j}{l}_{,k^-_i}\JJ{h}{p}_{,k^-_i}w^{lp}_{,k^-_i} \Bigg)
\end{multline}

\begin{multline}
    = \sum_{i=1}^N \phi_i\Bigg( \left(\JJ{j}{l}_{,k^+_i}\JJ{h}{p}_{,k^+_i} - \JJ{j}{l}_{,c}\JJ{h}{p}_{,c}\right)w^{lp}_{,k^+_i} \\ + \JJ{j}{l}_{,c}\JJ{h}{p}_{,c}\left(w^{lp}_{,k^+_i} - w^{lp}_{,c}\right) \Bigg) \\ + \sum_{i=1}^N \phi_i\Bigg(\JJ{j}{l}_{,c}\JJ{h}{p}_{,c}\left(w^{lp}_{,c} - w^{lp}_{,k^-_i}\right) \\ +  \left(\JJ{j}{l}_{,c}\JJ{h}{p}_{,c} - \JJ{j}{l}_{,k^-_i}\JJ{h}{p}_{,k^-_i}\right)w^{lp}_{,k^-_i} \Bigg)
\end{multline}

\begin{multline}
    = \JJ{j}{l}_{,c}\JJ{h}{p}_{,c}\sum_{i=1}^N \phi_i\Bigg[\left(w^{lp}_{,k^+_i} - w^{lp}_{,c}\right) + \left(w^{lp}_{,c} - w^{lp}_{,k^-_i}\right) \\ + \JJ{l}{m}_{,c}^{-1}\JJ{p}{q}_{,c}^{-1}\left(\JJ{m}{n}_{,k^+_i}\JJ{q}{r}_{,k^+_i} - \JJ{m}{n}_{,c}\JJ{q}{r}_{,c}\right)w^{nr}_{,k^+_i}  \\ +  \JJ{l}{m}_{,c}^{-1}\JJ{p}{q}_{,c}^{-1}\left(\JJ{m}{n}_{,c}\JJ{q}{r}_{,c} - \JJ{m}{n}_{,k^-_i}\JJ{q}{r}_{,k^-_i}\right)w^{nr}_{,k^-_i} \Bigg]
\end{multline}

\begin{multline}
    = \JJ{j}{l}_{,c}\JJ{h}{p}_{,c}\sum_{i=1}^N \phi_i\Bigg[\left(w^{lp}_{,k^+_i} - w^{lp}_{,k^-_i}\right) \\ + \JJ{l}{m}_{,c}^{-1}\JJ{p}{q}_{,c}^{-1}\left(\JJ{m}{n}_{,k^+_i}\JJ{q}{r}_{,k^+_i} - \JJ{m}{n}_{,c}\JJ{q}{r}_{,c}\right)w^{nr}_{,k^+_i}  \\ +  \JJ{l}{m}_{,c}^{-1}\JJ{p}{q}_{,c}^{-1}\left(\JJ{m}{n}_{,c}\JJ{q}{r}_{,c} - \JJ{m}{n}_{,k^-_i}\JJ{q}{r}_{,k^-_i}\right)w^{nr}_{,k^-_i} \Bigg]
\end{multline}

This suggests that the discrete covariant derivative corresponding to the discrete derivative is:

\begin{multline}
    CD_sw^{lp} = \sum_{i=1}^N \phi_i\Bigg[\left(w^{lp}_{,k^+_i} - w^{lp}_{,k^-_i}\right) \\ + \JJ{l}{m}_{,c}^{-1}\JJ{p}{q}_{,c}^{-1}\left(\JJ{m}{n}_{,k^+_i}\JJ{q}{r}_{,k^+_i} - \JJ{m}{n}_{,c}\JJ{q}{r}_{,c}\right)w^{nr}_{,k^+_i}  \\ +  \JJ{l}{m}_{,c}^{-1}\JJ{p}{q}_{,c}^{-1}\left(\JJ{m}{n}_{,c}\JJ{q}{r}_{,c} - \JJ{m}{n}_{,k^-_i}\JJ{q}{r}_{,k^-_i}\right)w^{nr}_{,k^-_i} \Bigg]
\end{multline}

This too can be shown to be consistent in the limit of mesh spacing going to zero.

\begin{multline*}
    \lim_{\Delta x\to 0} \sum_{i=1}^N \phi_i\Bigg[\left(w^{lp}_{,k^+_i} - w^{lp}_{,k^-_i}\right) \\ + \JJ{l}{m}_{,c}^{-1}\JJ{p}{q}_{,c}^{-1}\left(\JJ{m}{n}_{,k^+_i}\JJ{q}{r}_{,k^+_i} - \JJ{m}{n}_{,c}\JJ{q}{r}_{,c}\right)w^{nr}_{,k^+_i}  \\ +  \JJ{l}{m}_{,c}^{-1}\JJ{p}{q}_{,c}^{-1}\left(\JJ{m}{n}_{,c}\JJ{q}{r}_{,c} - \JJ{m}{n}_{,k^-_i}\JJ{q}{r}_{,k^-_i}\right)w^{nr}_{,k^-_i} \Bigg]
\end{multline*}

\begin{multline}
     = \frac{\partial w^{lp}}{\partial x^s} + \lim_{\Delta x\to 0} \sum_{i=1}^N \phi_i\Bigg[ \\ \JJ{l}{m}_{,c}^{-1}\JJ{p}{q}_{,c}^{-1}\left(\JJ{m}{n}_{,k^+_i}\JJ{q}{r}_{,k^+_i} - \JJ{m}{n}_{,c}\JJ{q}{r}_{,c}\right)w^{nr}_{,k^+_i}  \\ +  \JJ{l}{m}_{,c}^{-1}\JJ{p}{q}_{,c}^{-1}\left(\JJ{m}{n}_{,c}\JJ{q}{r}_{,c} - \JJ{m}{n}_{,k^-_i}\JJ{q}{r}_{,k^-_i}\right)w^{nr}_{,k^-_i} \Bigg]
\end{multline}

\begin{multline}
     = \frac{\partial w^{lp}}{\partial x^s} + \JJ{l}{m}^{-1}\JJ{p}{q}^{-1}w^{nr}\lim_{\Delta x\to 0} \sum_{i=1}^N \phi_i\Bigg[  \\ \left(\JJ{m}{n}_{,k^+_i}\JJ{q}{r}_{,k^+_i} - \JJ{m}{n}_{,c}\JJ{q}{r}_{,c}\right)  \\ +  \left(\JJ{m}{n}_{,c}\JJ{q}{r}_{,c} - \JJ{m}{n}_{,k^-_i}\JJ{q}{r}_{,k^-_i}\right) \Bigg]
\end{multline}

\begin{multline}
     = \frac{\partial w^{lp}}{\partial x^s} + \JJ{l}{m}^{-1}\JJ{p}{q}^{-1}w^{nr}\lim_{\Delta x\to 0} \sum_{i=1}^N \phi_i\Bigg[  \\ \JJ{m}{n}_{,k^+_i}\JJ{q}{r}_{,k^+_i} - \JJ{m}{n}_{,k^-_i}\JJ{q}{r}_{,k^-_i} \Bigg]
\end{multline}

\begin{equation}
     = \frac{\partial w^{lp}}{\partial x^s} + \JJ{l}{m}^{-1}\JJ{p}{q}^{-1}w^{nr}\frac{\partial}{\partial x^s}\Bigg( \JJ{m}{n}\JJ{q}{r}\Bigg)
\end{equation}

\begin{equation}
     = \frac{\partial w^{lp}}{\partial x^s} + \JJ{l}{m}^{-1}\JJ{p}{q}^{-1}w^{nr}\Bigg( \JJ{m}{n}\frac{\partial}{\partial x^s}\JJ{q}{r} + \JJ{q}{r}\frac{\partial}{\partial x^s}\JJ{m}{n}\Bigg)
\end{equation}

\begin{equation}
     = \frac{\partial w^{lp}}{\partial x^s} + w^{nr}\Bigg( \delta^l_n\JJ{p}{q}^{-1}\frac{\partial}{\partial x^s}\JJ{q}{r} + \delta^p_r\JJ{l}{m}^{-1}\frac{\partial}{\partial x^s}\JJ{m}{n}\Bigg)
\end{equation}

\begin{equation}
     = \frac{\partial w^{lp}}{\partial x^s} + w^{lr} \JJ{p}{q}^{-1}\frac{\partial}{\partial x^s}\JJ{q}{r} + w^{np}\JJ{l}{m}^{-1}\frac{\partial}{\partial x^s}\JJ{m}{n}
\end{equation}

\begin{equation}
     = \frac{\partial w^{lp}}{\partial x^s} + \Gamma\indices{_s^p_r} w^{lr} + \Gamma\indices{_s^l_n} w^{np}
\end{equation}

Which is the covariant derivative of a rank 2 tensor.

Following the same process we can derive the discrete covariant derivative for a rank $n$ tensor corresponding to a discrete derivative:

\begin{multline}
    CD_sw^{i_1i_2...i_n} = \sum_{i=1}^N \phi_i\Bigg[\left(w^{i_1i_2...i_n}_{,k^+_i} - w^{i_1i_2...i_n}_{,k^-_i}\right) \\ + \prod_{l=1}^n\JJ{i_l}{j_l}_{,c}^{-1}\left(\prod_{l=1}^n\JJ{j_l}{m_l}_{,k^+_i} - \prod_{l=1}^n\JJ{j_l}{m_l}_{,c}\right)w^{m_1m_2...m_n}_{,k^+_i}  \\ +  \prod_{l=1}^n\JJ{i_l}{j_l}_{,c}^{-1}\left(\prod_{l=1}^n\JJ{j_l}{m_l}_{,c} - \prod_{l=1}^n\JJ{j_l}{m_l}_{,k^-_i}\right)w^{m_1m_2...m_n}_{,k^-_i} \Bigg]
\end{multline}

These expressions can be used to derive the discrete analog of any operator which is based on the covariant derivative. Not only are these expressions consistent in the limit of zero mesh spacing, but they also preserve the tensorial nature of the true covariant derivative. As a consequence of the latter property, we have the additional property:

\begin{theorem}\label{PreserveZeroThm}
    Let $D$ be a discrete differential operator with coefficients that sum to zero. And let $CD$ be the associated discrete covariant derivative. Then for any rank $n$ tensor, $w$, we have the property:
    \begin{multline}
        D_s\tilde{w}^{j_1j_2...j_n} = 0 \quad\forall j_1j_2...j_n\in\{1,2,...,d\}^n \\ \Leftrightarrow CD_sw^{l_1l_2...l_n} = 0\quad\forall l_1l_2...l_n\in\{1,2,...,d\}^n
    \end{multline}
    where $d$ is the dimensionality of the manifold.
\end{theorem}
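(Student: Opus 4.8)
The plan is to reduce the theorem to the single structural identity that the construction of $CD$ was built to satisfy: for any rank-$n$ tensor, the Cartesian components of the discrete derivative factor through the discrete covariant derivative via the Jacobian,
\begin{equation}
    D_s\tilde{w}^{j_1j_2\ldots j_n} = \left(\prod_{l=1}^n \JJ{j_l}{i_l}_{,c}\right) CD_s w^{i_1i_2\ldots i_n}.
\end{equation}
This generalizes the relation $D_i\tilde{u}^k = \JJ{k}{l}CD_iu^l$ that defined $CD$ and was verified explicitly for ranks $1$ and $2$ in the preceding derivation. It is the only nontrivial fact the proof needs; everything else follows from the invertibility of the Jacobian, which holds everywhere on the Riemannian manifold since it is the Jacobian of a genuine coordinate change.

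The reverse implication is immediate. If $CD_s w^{i_1\ldots i_n} = 0$ for every choice of indices, then every summand on the right-hand side of the identity vanishes, so $D_s\tilde{w}^{j_1\ldots j_n} = 0$ for every choice of indices.

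For the forward implication I would contract the central identity with inverse Jacobians. Assuming $D_s\tilde{w}^{j_1\ldots j_n} = 0$ for all index combinations, I multiply both sides by $\prod_{l=1}^n \JD{p_l}{j_l}_{,c}$ and sum over $j_1,\ldots,j_n$. On the right each paired factor collapses by $\JD{p}{j}_{,c}\JJ{j}{i}_{,c} = \delta^p_i$, and since this collapse decouples across the $n$ index slots the product reduces to exactly $CD_s w^{p_1\ldots p_n}$; on the left every term is zero by hypothesis. Hence $CD_s w^{p_1\ldots p_n} = 0$ for every $(p_1,\ldots,p_n)$.

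The main obstacle is not analytic but a matter of bookkeeping: one must use the ``for all index combinations'' quantifier at full strength. The contraction in the forward direction sums over the complete range of the Cartesian indices $j_1,\ldots,j_n$, so it genuinely requires every Cartesian component to vanish, not merely one, and conversely it delivers the vanishing of every curved component. Carefully stating the rank-$n$ factorization identity and confirming that the Kronecker-delta collapse separates cleanly across all $n$ slots is the only point requiring attention.
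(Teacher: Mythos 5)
Your proposal is correct and follows essentially the same route as the paper: both proofs rest on the rank-$n$ factorization identity $D_s\tilde{w}^{j_1\ldots j_n} = \left(\prod_{l=1}^n \JJ{j_l}{i_l}\right) CD_s w^{i_1\ldots i_n}$ established by the preceding construction, using it directly (i.e., in its contracted, inverse-Jacobian form) for the forward implication and in its stated form for the reverse. The paper's proof is just a terser rendering of your two contractions, so there is nothing to add.
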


\begin{proof}
    If $D_s\tilde{w}^{j_1j_2...j_n} = 0 \quad\forall j_1j_2...j_n\in\{1,2,...,d\}^n$, then:
    
    \begin{multline*}
        CD_sw^{l_1l_2...l_n} = \left(\prod_{k=1}^n\JJ{l_k}{j_k}^{-1}\right)D_s\tilde{w}^{j_1j_2...j_n} = \left(\prod_{k=1}^n\JJ{l_k}{j_k}^{-1}\right)0 \\ = 0 \quad\forall l_1l_2...l_n\in\{1,2,...,d\}^n
    \end{multline*}
    
    Likewise, if $CD_sw^{l_1l_2...l_n} = 0\quad\forall l_1l_2...l_n\in\{1,2,...,d\}^n$, then:
    
    \begin{multline*}
        D_s\tilde{w}^{j_1j_2...j_n} = \left(\prod_{k=1}^n\JJ{j_k}{l_k}\right)CD_sw^{l_1l_2...l_n} = \left(\prod_{k=1}^n\JJ{j_k}{l_k}\right)0 \\ = 0 \quad\forall j_1j_2...j_n\in\{1,2,...,d\}^n
    \end{multline*}
    
\end{proof}

This means that a tensor field which is uniform with respect to the Cartesian basis will be treated as exactly uniform with respect to the curved basis. This is an important property in ensuring that certain steady states of fluid flow problems are appropriately captured by a numerical method.

As a final point, it is worth noting that these expressions are still linear operators which do not depend on the function they are acting on. They depend solely on the mesh and stencil chosen, so as long as those things remain the same, the operators do not have to be recomputed. In practice then, applying the covariant derivative operator is only marginally more expensive computationally than applying a standard derivative operator.

\section{Application to central scheme for conservation laws}\label{sec:CS}

To illustrate how the source terms we derived can be put into practice, we consider a central scheme developed by Kurganov and Tadmor \cite{Kurg}. Central Schemes are a type of finite volume numerical method often applied to conservation laws. These have the advantage over other finite volume methods of not relying on solutions to the Riemann problem. The simplicity of such methods makes them easier to implement, and faster to run. 

The method derived has the semi-discrete form for a one dimensional problem with uniform mesh spacing:

\begin{multline}\label{CartesianCS}
    \frac{d}{dt}u_{,i} = -\frac{f^+_{,i+1/2}+f^-_{,i+1/2}}{2\Delta x} + \frac{\lambda_{M,i+1/2}}{2\Delta x}[u^+_{,i+1/2}-u^-_{,i+1/2}] \\
    +\frac{f^+_{,i-1/2}+f^-_{,i-1/2}}{2\Delta x} - \frac{\lambda_{M,i-1/2}}{2\Delta x}[u^+_{,i-1/2}-u^-_{,i-1/2}]
\end{multline}

In this expression, $\{u_{,i}\}_{i=1}^N$ is the discrete representation of the quantity being conserved, $f$ is the flux function for that quantity, and $\lambda_M$ is the maximum wave speed at the specified cell boundary. The index notation $i\pm1/2$ refers to the plus and minus boundaries of the $i$th cell, and a superscript $+$ or $-$ refers to a value defined on the plus or minus side of that cell boundary. These are calculated:

\begin{equation}
    u^{\mp}_{,i\pm1/2} = u_{,i} \pm \frac{\Delta x}{2}u_{x,i}
\end{equation}

\begin{equation}
    u^{\pm}_{,i\pm1/2} =  u_{,i\pm1} \mp \frac{\Delta x}{2}u_{x,i\pm1} = u^{\pm}_{,(i\pm1)\mp1/2}
\end{equation}

\begin{equation}
    f^{\mp}_{,i\pm1/2} = f\left(u^{\mp}_{,i\pm1/2}\right)
\end{equation}

\begin{equation}
    f^{\pm}_{,i\pm1/2} =  f\left(u^{\pm}_{,i\pm1/2}\right) = f\left(u^{\pm}_{,(i\pm1)\mp1/2}\right)
\end{equation}

\begin{equation}
    \lambda_{M,i\pm1/2} = \text{max}\left( \lambda\left( \frac{\partial f}{\partial u}\left(u^{\mp}_{,i\pm1/2}\right) \right), \lambda\left( \frac{\partial f}{\partial u}\left(u^{\pm}_{,i\pm1/2}\right) \right) \right)
\end{equation}

Expressions for $u_{x,i}$ are derived based on the values of $u$ in neighboring cells. In order to improve stability, numerical methods for conservation laws use TVD slope approximations which prevent spurious oscillations from occurring around shock waves. This is addressed in the next subsection.

The expression for $\frac{d}{dt}u_{,i}$ in equation \eqref{CartesianCS} can be computed according to alogrithm \ref{CartesianCSUt}. This can then be integrated in time using the ODE solver of one's choice.

\begin{algorithm}
\caption{Compute Time Derivative}\label{CartesianCSUt}
\begin{algorithmic}[1]
\Procedure{$U_t$}{$u$}
\State $\text{compute }u_{x,i}\quad\forall i$ using a TVD scheme
\State $u^{\mp}_{,i\pm1/2} \gets u_{,i} \pm \frac{\Delta x}{2}u_{x,i}$
\State $f^{\mp}_{,i\pm1/2} \gets f\left(u^{\mp}_{,i\pm1/2}\right)$
\State$\lambda_{M,i\pm1/2} \gets \text{max}\left[ \rho\left( \frac{\partial f}{\partial u}\left(u^{\mp}_{,i\pm1/2}\right) \right), \rho\left( \frac{\partial f}{\partial u}\left(u^{\pm}_{,(i\pm1)\mp1/2}\right) \right) \right]$
\State $u_{t,i} \gets $\eqref{CartesianCS}
\EndProcedure
\end{algorithmic}
\end{algorithm}

This method performs well on problems set in a Cartesian coordinate system, but it is not suited, in its current form, to be used on a curved manifold. Both the slope approximations and the time derivative formula \eqref{CartesianCS} must be modified using the discrete source terms that have been derived.

\subsection{Slope limiting}

It is a known problem that numerical methods for fluid flow problems can cause non-physical oscillations to occur near the steep gradients of shock waves. In some cases, these oscillations can even cause the solution to destabilize and blow up. To prevent this, TVD slope approximations, or ``slope limiters,'' are used to calculate discrete derivatives. The most common slope limiter is probably the minmod limiter where minmod is defined by:

\begin{equation}
    \text{minmod}(x,y) = \frac{1}{2}(\text{sign}(x)+\text{sign}(y))\min(|x|,|y|)
\end{equation}

and the derivative of the solution in each mesh cell is given by:

\begin{equation}\label{ux}
    u_{x,i} = \text{minmod}\left( \frac{u_{,i}-u_{,i-1}}{\Delta x}, \frac{u_{,i+1}-u_{,i}}{\Delta x}\right)
\end{equation}

To apply this process to tensorial quantities, the covariant derivative must replace all derivatives. That is:

\begin{equation}
    (u)_{|x,i} = \text{minmod}\left( CD^B_xu_{,i}, CD^F_xu_{,i}\right)
\end{equation}

where $CD^B_x$ and $CD^F_x$ are respectively the discrete covariant derivative operators derived from the backward and forward derivative operators in equation \eqref{ux}. By considering the tensor basis to be constant inside a mesh cell, we have the relationship:

\begin{equation}
    u_{x,i} = (u)_{|x,i}
\end{equation}

Thus we can compute the values of $u$ throughout the cell as:

\begin{equation}
    \tilde{u}_{,i}(x) = u_{,i} + (u)_{|x,i}(x-x_{,i})
\end{equation}

which provides a way to compute the values at the cell boundaries.

\subsection{Parallel transport}

Before addressing the changes to the time derivative formula \eqref{CartesianCS}, we must first introduce a new concept to overcome an issue with finite volume methods on manifolds. Finite volume methods are based on integration rather than differentiation. The derivation for equation \eqref{CartesianCS} presented in \cite{Kurg} is based entirely on integration. This poses a unique challenge on a manifold with a non-uniform tensor basis. In such a setting, integrating the components of a tensor field results in a meaningless quantity. As an example, consider the integral:

\begin{equation}
    \int_0^{2\pi}\int_0^1 \cos\theta\hat{r} - \sin\theta\hat{\theta} \quad rdrd\theta
\end{equation}

which is the integral of the Cartesian vector $\hat{x}$ over the unit circle. A simple calculation will show that the integral comes out to be zero even though the true vector field is nonzero everywhere. This clearly creates a problem for a numerical method which is based on integration. In order to apply finite volume methods, tensors must be shifted to uniform bases before they can be integrated. In order to carry out these shifts without changing the tensors, we use the process of parallel transport. Parallel transport is the process of moving a tensorial quantity from one basis to another without changing its true value \cite{Man_Lev,Lovelock}. Definition \ref{PTdef} states this more formally.

\begin{definition}\label{PTdef}
    Let $s$ be a curve along a manifold. A tensor, $w$ is said to be \emph{parallel transported} along $s$ if the covariant derivative of $w$ along $s$ is identically zero. That is:
    
    \begin{equation}\label{PT}
        (w)_{|i}\frac{\partial x^i}{\partial s}=0
    \end{equation}
    
\end{definition}

If we have a tensor defined at a point, $x^i_1$ on a manifold, and are interested in finding out what that tensor's components would be at another point $x^i_2$, we can solve equation \eqref{PT} with $s$ being a curve which connects $x^i_1$ and $x^i_2$.

A discrete analog of this process can be developed using the discrete covariant derivative. Consider two neighboring mesh cells with indices $i-1$ and $i$, and centers $x_{i-1}$ and $x_{i}$. Say there is a tensor defined at $x_{i-1}$ which we would like to transport to $x_{,i}$. Using the two point difference operator, the parallel transport condition can be posed as:

\begin{equation*}
    \frac{1}{\Delta x}\left[u^l_{,i} - u^l_{,i-1} + \JJ{l}{m}_{,i}^{-1}\left(\JJ{m}{n}_{,i} - \JJ{m}{n}_{,i-1}\right)u^n_{,i-1}\right] = 0
\end{equation*}

\begin{equation}
    \Rightarrow u^l_{,i} = u^l_{,i-1} - \JJ{l}{m}_{,i}^{-1}\left(\JJ{m}{n}_{,i} - \JJ{m}{n}_{,i-1}\right)u^n_{,i-1}
\end{equation}

for a rank 1 tensor, and:

\begin{multline*}
    \frac{1}{\Delta x}\bigg[w^{lp}_{,i} - w^{lp}_{,i-1} \\ + \JJ{l}{m}_{,i}^{-1}\JJ{p}{q}_{,i}^{-1}\left(\JJ{m}{n}_{,i}\JJ{q}{r}_{,i} - \JJ{m}{n}_{,i-1}\JJ{q}{r}_{,i-1}\right)w^{nr}_{,i-1}\bigg] = 0
\end{multline*}

\begin{multline}
    \Rightarrow w^{lr}_{,i} = w^{lp}_{,i-1} \\ -  \JJ{l}{m}_{,i}^{-1}\JJ{p}{q}_{,i}^{-1}\left(\JJ{m}{n}_{,i}\JJ{q}{r}_{,i} - \JJ{m}{n}_{,i-1}\JJ{q}{r}_{,i-1}\right)w^{nr}_{,i-1}
\end{multline}

for a rank 2 tensor, and so on. These expression conveniently provide a straightforward way to compute the discretely parallel transported form of tensors in neighboring mesh cells. The notation $PT_{,i}(w^l_{,j})$ will be used to refer to a tensor which has been parallel transported from mesh cell $j$ to mesh cell $i$.

In \cite{Man_Lev}, parallel transport was used to adapt a finite volume method to a curved manifold by, in short, transporting neighboring cells to a common basis and then applying the cartesian form of the finite volume method to the transported components. The same will be done to the present central scheme, but using the discrete parallel transport expression which preserves tensorial transformations.

\subsection{Modified central scheme}


A slightly modified process for computing the time derivative which accounts for the non-uniform basis can now be devised. First, the solution is reconstructed by calculating slopes using the minmod limiter on the backward and forward covariant derivatives. These are used to compute the values of $u$, and $f$ at the cell boundaries. These values are then parallel transported to the neighboring cells which depend on them. Once all the tensors share a basis, their components can be integrated to acquire meaningful quantities. The derivation of \eqref{CartesianCS} presented in \cite{Kurg} then proceeds identically, but applied to the transported quantities. The resulting expression is given here:

\begin{multline}\label{ManifoldCS}
    \frac{d}{dt}u_{,i} = -\frac{PT_{,i}(f^+_{,i+1/2})+f^-_{,i+1/2}}{2\Delta x} + \frac{\lambda_{M,i+1/2}}{2\Delta x}[PT_{,i}(u^+_{,i+1/2})-u^-_{,i+1/2}] \\
    +\frac{f^+_{,i-1/2}+PT_{,i}(f^-_{,i-1/2})}{2\Delta x} - \frac{\lambda_{M,i-1/2}}{2\Delta x}[u^+_{,i-1/2}-PT_{,i}(u^-_{,i-1/2})]
\end{multline}

The expression is similar to \eqref{CartesianCS}, except that $u^\pm_{,i\pm1/2}$ and $f^\pm_{,i\pm1/2}$ are replaced by $PT_{,i}(u^\pm_{,i\pm1/2})$ and $PT_{,i}(f^\pm_{,i\pm1/2})$ respectively. In addition, the maximum wave speeds, $\lambda_M$, have to be computed based on parallel transported values. The modified procedure for computing the time derivative is given in algorithm \ref{ManifoldCSUt}. 

\begin{algorithm}
\caption{Compute Time Derivative - Manifold}\label{ManifoldCSUt}
\begin{algorithmic}[1]
\Procedure{$U_t$}{$u$}
\State $\text{compute }u_{|x,i} = \text{minmod}\left( CD^B_xu_{,i}, CD^F_xu_{,i}\right)\quad\forall i$
\State $u^{\mp}_{,i\pm1/2} \gets u_{,i} \pm \frac{\Delta x}{2}u_{|x,i}$
\State $f^{\mp}_{,i\pm1/2} \gets f\left(u^{\mp}_{,i\pm1/2}\right)$
\State$\lambda_{M,i\pm1/2} \gets \text{max}\left[ \rho\left( \frac{\partial f}{\partial u}\left(u^{\mp}_{,i\pm1/2}\right) \right), \rho\left( PT_{,i}\left(\frac{\partial f}{\partial u}\left(u^{\pm}_{,(i\pm1)\mp1/2}\right)\right) \right) \right]$
\State $u_{t,i} \gets $\eqref{ManifoldCS}
\EndProcedure
\end{algorithmic}
\end{algorithm}

\begin{remark}
    In some applications there will be the relationships $PT_{,i}(f(U_{,j}))=f(PT_{,i}(U_{,j}))$ and $PT_{,i}(\frac{\partial f}{\partial U}(U_{,j}))=\frac{\partial f}{\partial U}(PT_{,i}(U_{,j}))$. This would allow one to skip the step in which the flux functions and their Jacobians are parallel transported by instead using the parallel transported solution variables to compute the neighboring flux functions and wave speeds in the local basis. These relationships will not hold however, if $f$ depends on a spatial variable.
\end{remark}

For higher dimension domains, \eqref{ManifoldCS} can be naturally extended. For two dimensions the expression is:

\begin{multline}\label{2DManifoldCS}
    \frac{d}{dt}u_{,i,j} = -\frac{PT_{,i,j}(f^{1+}_{,i+1/2,j})+f^{1-}_{,i+1/2,j}}{2\Delta x^1} + \frac{\lambda_{M,i+1/2,j}}{2\Delta x^1}[PT_{,i,j}(u^+_{,i+1/2,j})-u^-_{,i+1/2,j}] \\
    +\frac{f^{1+}_{,i-1/2,j}+PT_{,i,j}(f^{1-}_{,i-1/2,j})}{2\Delta x^1} - \frac{\lambda_{M,i-1/2,j}}{2\Delta x^1}[u^+_{,i-1/2,j}-PT_{,i,j}(u^-_{,i-1/2,j})] \\
     -\frac{PT_{,i,j}(f^{2+}_{,i,j+1/2})+f^{2-}_{,i,j+1/2}}{2\Delta x^2} + \frac{\lambda_{M,i,j+1/2}}{2\Delta x^2}[PT_{,i,j}(u^+_{,i,j+1/2})-u^-_{,i,j+1/2}] \\
    +\frac{f^{2+}_{,i,j-1/2}+PT_{,i,j}(f^{2-}_{,i,j-1/2})}{2\Delta x^2} - \frac{\lambda_{M,i,j-1/2}}{2\Delta x^2}[u^+_{,i,j-1/2}-PT_{,i,j}(u^-_{,i,j-1/2})]
\end{multline}

and so on for arbitrary dimensions. All of these can be integrated in time using whichever ODE solver that one prefers.

\section{Conical Flow}\label{sec:CF}

The conical Euler and MHD equations, which govern flow past an infinite cone of arbitrary cross section, are derived and analyzed in \cite{ConEuler} and \cite{ConMHD} respectively. These equations result from setting the corresponding system in a 3D Euclidean space covered by coordinates $(\xi^1,\xi^2,r)$, where $\xi^\beta$ are defined on the surface of the sphere and $r$ is the radial coordinate, and then setting the covariant derivative in the $r$ direction equal to zero. Doing so completely removes any dependence on $r$, leaving a system defined entirely on the surface of a sphere. The origin of the space (and center of the sphere) is taken to be the tip of the cone whose cross section, by definition, does not depend on $r$ either.

The conical equations, \ref{EulerCon} and \ref{MHDCon}, involve the contracted covariant derivative (denoted $(\cdot)_{|\beta}$) where the contraction is only performed over the components corresponding to the surface of the sphere ($\beta\in\{1,2\}$). These forms of the equations are slightly different than the forms presented in \cite{ConEuler} and \cite{ConMHD}, but they are still consistent, differing only by a factor of $\sqrt{g}$. The forms considered for the analysis of these equations rely on the relationship $\overset{(G)}{\Gamma}\indices{_i^j_j} = \frac{1}{\sqrt{G}}\frac{\partial \sqrt{G}}{\partial x^i}$ which will not in general be valid in the discrete case. The forms given here only rely on the tensorial transformation properties of the covariant derivative and thus can be discretized using the source terms already presented.

One option for solving the conical equations is to convert them to time dependent problems as described in \ref{EulerCon} and \ref{MHDCon} and apply the central scheme \eqref{2DManifoldCS} and then march in time until a steady state is achieved. Initial tests of this procedure were conducted and it was determined that it was too time consuming and did not achieve good long term results due to waves reflecting off the surface of the cone. Instead, a finite difference/area method was used to discretize the conical equations, and an iterative scheme based on Newton's method was used to solve the system of nonlinear equations. Solutions were achieved much faster and were void of residual transient modes. This is the method which is described throughout the following sections.

\section{Mesh}\label{sec:Mesh}

A computational domain must be created in order to solve the equations numerically. For the problem of conical flow, the domain is on the surface of a unit sphere. Because most meshing utilities assume Cartesian coordinate systems are being used, it is simplest to create a 2D mesh which represents the spherical slice of the 3D domain having been projected onto the XY plane, and then compute what the curvature would be in the solver. Spherical curvature is simple enough to compute since most all relations between Cartesian and spherical coordinate systems have analytical expressions.

\begin{figure}
    \centering
    \includegraphics[scale=.55]{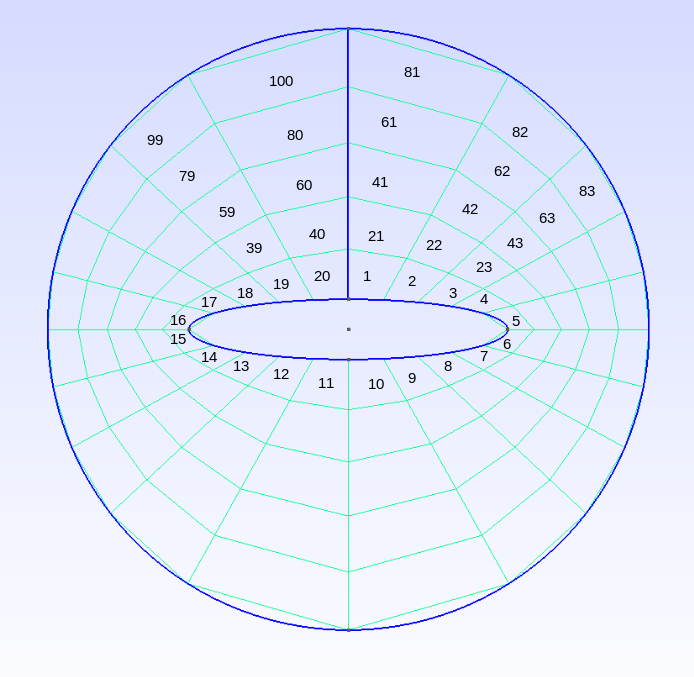}
    \caption{Example mesh for flow past an elliptic cone. The mesh is abstractly rectangular, with width 20 and height 5 and cell indices going from left to right, bottom to top. This mesh was created using GMSH.}
    \label{numMesh}
\end{figure}

Following this procedure, a mesh will be generated such as the one given in Figure \ref{numMesh}. The center of the mesh is the origin $(0,0)$ and $x^2 + y^2 \leq 1$ for all points in the mesh. Though this mesh is curved, it is abstractly rectangular, with a height and width and predictable ordering. The mesh cells can be identified by a single index, say $i$, and except for at the far left and far right boundaries of the mesh will have left and right neighbors $i-1$ and $i+1$ respectively, and top and bottom neighbors $i+W$ and $i-W$ respectively where $W$ is the width of the mesh. At the left boundary, the left neighbor has index $i+W-1$, and at the right boundary, the right neighbor has index $i-W+1$.

Each computational cell has four vertices which have Cartesian coordinates $\{(x_i,y_i)\}^4_{i=1}$ reported by the mesh generating software. The spherical coordinates $(\theta,\phi)$  on the sphere ($\theta$ is the azimuthal angle and $\phi$ is the zenith angle) associated with each $(x,y)$ can then be computed as follows:

\begin{equation}
\theta=\begin{cases}
	\arctan(x/y) & x\geq0 \\ 
    \arctan(x/y) - \pi & x<0 \\ 
\end{cases}
\end{equation}

\begin{equation}
    \phi = \arcsin(\sqrt{x^2 + y^2})
\end{equation}

\begin{figure}
    \centering
    \includegraphics[scale=.55]{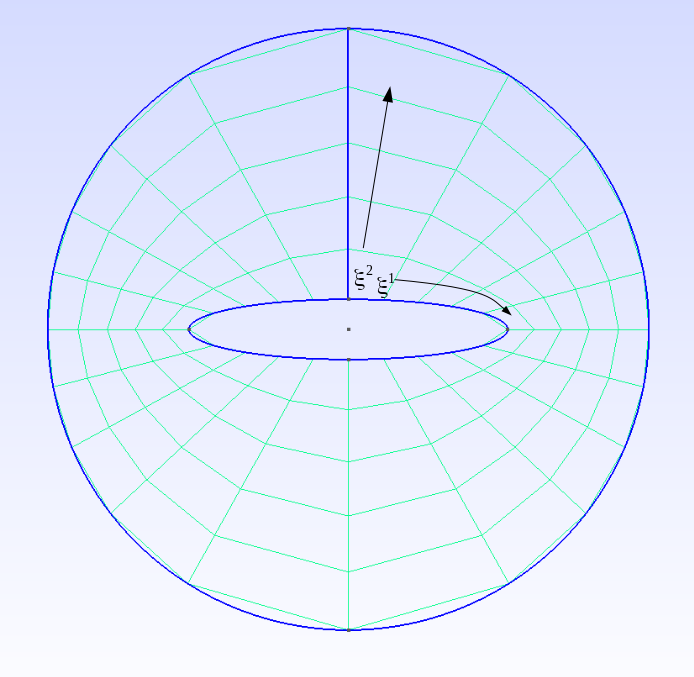}
    \caption{Coordinates defined by the mesh lines}
    \label{MeshCoords}
\end{figure}

This gives for each cell four new sets of coordinates $\{(\theta_i,\phi_i)\}^4_{i=1}$. These coordinates will not in general be aligned with the mesh. In order to simplify the formulation of the discrete problem and some of the calculations involved, it is convenient to do one more coordinate transformation to a coordinate system whose coordinate lines are defined to be the mesh lines. These coordinates are $(\xi^1,\xi^2)$, with $\xi^1$ going left to right, and $\xi^2$ going bottom to top as shown in Figure \ref{MeshCoords}. The exact value of each of these coordinates in each cell is not important, so it can be freely assumed that $\xi^1,\xi^2\in[0,1]$ in each cell. The relationship between the spherical coordinates and the mesh coordinates can be computed as described in \cite{SriFAMethod}. Basis functions are defined inside the cell which are given by:

\begin{subequations}\label{CellBasis}
\begin{gather}
    b^1 = \xi^1(1-\xi^2) \\
    b^2 = \xi^1\xi^2    \\
    b^3 = (1-\xi^1)\xi^2    \\
    b^4 = (1-\xi^1)(1-\xi^2)
\end{gather}
\end{subequations}


There is one basis function corresponding to each corner of the cell. That function has unit value at that corner and zero at every other corner. Figure \ref{b2plot} gives a plot of such a function. Using these functions, $\theta$ and $\phi$ can be given as functions of $\xi^1$ and $\xi^2$ inside each cell:

\begin{figure}
    \centering
    \includegraphics[scale=.5]{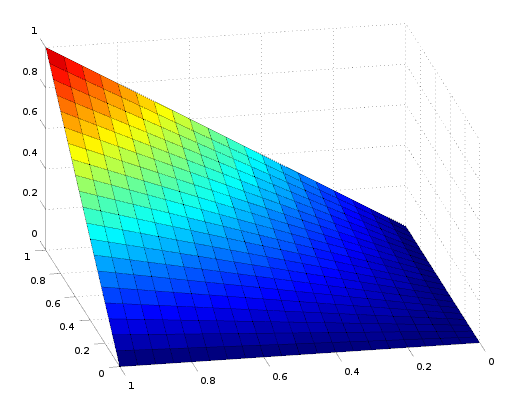}
    \caption{Plot of $b^2$ basis function}
    \label{b2plot}
\end{figure}

\begin{equation}
    \theta(\xi^1,\xi^2) = \sum_1^4\theta_ib^i(\xi^1,\xi^2)
\end{equation}

\begin{equation}
    \phi(\xi^1,\xi^2) = \sum_1^4\phi_ib^i(\xi^1,\xi^2)
\end{equation}

with derivatives:

\begin{equation}
    \theta_{\xi^1}(\xi^1,\xi^2) = \sum_1^4\theta_ib_{\xi^1}^i(\xi^1,\xi^2)
\end{equation}

\begin{equation}
    \theta_{\xi^2}(\xi^1,\xi^2) = \sum_1^4\theta_ib_{\xi^2}^i(\xi^1,\xi^2)
\end{equation}

etc.

The Jacobian matrix of the transformation from spherical coordinates to mesh coordinates is given by:

\begin{equation}
    J_{\theta\rightarrow\xi} = \left[ \begin{smallmatrix} \theta_{\xi^1} & \theta_{\xi^2} \\ \phi_{\xi^1} & \phi_{\xi^2} \end{smallmatrix} \right]
\end{equation}

for just the surface coordinates or by:

\begin{equation}
    J_{\theta\rightarrow\xi} = \left[ \begin{smallmatrix} \theta_{\xi^1} & \theta_{\xi^2} & 0 \\ \phi_{\xi^1} & \phi_{\xi^2} & 0 \\ 0 & 0 & 1 \end{smallmatrix} \right]
\end{equation}

if the radial coordinate is included. 

\begin{remark}
    It is important to keep in mind that $\theta=\frac{\pi}{2}$ is the same as $\theta=-\frac{3\pi}{2}$ in the cells at the far right side of the mesh. Otherwise the Jacobian matrices could have some erroneous entries.
\end{remark}

The Jacobian matrix of the transformation from Cartesian coordinates to spherical coordinates (on a unit sphere) is given by:

\begin{equation}
    J_{x\rightarrow\theta} = \left[ \begin{smallmatrix} x_\theta & x_\phi & x_r \\ 	y_\theta & y_\phi & y_r\\  z_\theta & z_\phi & z_r \end{smallmatrix} \right] = \left[ \begin{smallmatrix} -\sin\theta \sin\phi & \cos\theta \cos\phi & \cos\theta \sin\phi\\ 	\cos\theta \sin\phi & \sin\theta \cos\phi & \sin\theta \sin\phi\\  0 & -\sin\phi & \cos\phi \end{smallmatrix} \right]
\end{equation}

The total Jacobian matrix from Cartesian coordinates to mesh coordinates is computed easily by the matrix product:

\begin{equation}
    J_{x\rightarrow\xi} = J_{x\rightarrow\theta}J_{\theta\rightarrow\xi}
\end{equation}

A Jacobian matrix can now be computed at the center of each computational cell $(\xi^1,\xi^2)=(0.5,0.5)$ which serves as the basis for tensors in that cell. Having achieved this, discrete Christoffel symbols can be derived according to the process outlined above.

\begin{remark}
    It is not entirely necessary that $\xi^1,\xi^2\in[0,1]$. These variables can be treated as having different ranges within the cells in order to have better conditioned Jacobian matrices or less change in tensor components from one mesh cell to the next. If the mesh cells become very oblong, or their sizes change dramatically over a small region of the mesh, these issues could affect the stability of the numerical method. It is important however that all the $\xi^1$ ranges in a mesh column, or $\xi^2$ ranges in a mesh row are the same.
\end{remark}

\section{Boundary conditions}\label{sec:BC}

Free stream conditions for all solution variables are established in the outermost mesh cells, that is the row of cells along the top of the abstractly rectangular mesh, as Dirichlet boundary conditions. Higher order stencils used for discrete derivatives had to be backward biased in cells near the outer boundary since they would otherwise require values from cells which do not exist.

At the body of the cone, which is the row of cells along the bottom of the abstractly rectangular mesh, forward biased differencing must be used to avoid relying on nonexistent cells. The velocity component in the $\xi^2$ direction is set to zero as a Dirichlet boundary condition here. This is the no penetration condition which requires that the velocity at a wall must be parallel to the wall. 

For the MHD case, the cone is assumed to be a perfect conductor. A conductor which is in steady state will have a charge arrangement such that the electric field is perpendicular to the surface. In the perfectly conducting fluid, there is the relationship from Ohm's law \cite{MHDFlowPastBodies}:

\begin{equation}
    -\pmb{E} = \pmb{V}\times\pmb{B}
\end{equation}

If we let $\pmb{n}$ be the normal at the surface of the cone then we have:

\begin{equation}
    -\pmb{n}\times\pmb{E} = \pmb{n}\times\left(\pmb{V}\times\pmb{B}\right)
\end{equation}

which leads to:

\begin{equation}
    0 = \pmb{V}\left(\pmb{n}\cdot\pmb{B}\right) - \pmb{B}\left(\pmb{n}\cdot\pmb{V}\right)
\end{equation}

and because of the no penetration condition, we have:

\begin{equation}
    0 = \pmb{V}\left(\pmb{n}\cdot\pmb{B}\right) \Rightarrow \pmb{n}\cdot\pmb{B} = 0
\end{equation}

which says that the magnetic field must also be parallel to the wall. All the other other variables at the wall are free. 

The last thing that must be accounted for is that the mesh is periodic in the $\xi^1$ direction. The far right column of cells is also to the left of the far left column of cells.

\section{Discretization}\label{sec:Disc}

For each mesh cell, there is one variable corresponding to each unknown in the conical equations. For the conical Euler equations, that is:

\begin{equation*}
    \left\{ \left[ \begin{smallmatrix} \rho_,i \\ \pmb{V}_,i \\ e_,i \end{smallmatrix} \right]\right\}_{i=1}^N = \left\{ \left[ \begin{smallmatrix} \rho_,i \\ v^1_,i \\ v^2_,i \\ V^3_,i \\ e_,i \end{smallmatrix} \right]\right\}_{i=1}^N   
\end{equation*}

and for the conical MHD equations:

\begin{equation*}
    \left\{ \left[ \begin{smallmatrix} \rho_,i \\ \pmb{V}_,i \\ e_,i \\ \pmb{B}_,i \end{smallmatrix} \right]\right\}_{i=1}^N = \left\{ \left[ \begin{smallmatrix} \rho_,i \\ v^1_,i \\ v^2_,i \\ V^3_,i \\ e_,i \\ b^1_,i \\ b^2_,i \\ B^3_,i  \end{smallmatrix} \right]\right\}_{i=1}^N   
\end{equation*}

For each variable in each cell there is an associated flux function. These are the functions on the LHS of equations \eqref{EulerCon} and \eqref{MHDCon} of which the contracted covariant derivative is being taken. For $\rho$ and $e$, the flux is a rank 1 tensor, while for $\pmb{V}$, and $\pmb{B}$ the flux is a rank 2 tensor. These flux functions are computed in each cell using the variables and inverse metric tensor in that cell giving:

\begin{equation*}
    \left\{ \left[ \begin{matrix} f^j_{\rho,i} \\ f^{kj}_{\pmb{V},i} \\ f^j_{e,i} \end{matrix} \right]\right\}_{i=1}^N \text{or } \left\{ \left[ \begin{matrix} f^j_{\rho,i} \\ f^{kj}_{\pmb{V},i} \\ f^j_{e,i} \\ f^{kj}_{\pmb{B},i} \end{matrix} \right]\right\}_{i=1}^N  
\end{equation*}

With these defined in every cell in the mesh, equations \eqref{EulerCon} and \eqref{MHDCon} can be discretized with the covariant derivatives derived earlier using any stencil that one desires. 

To improve stability, a viscous-like dissipation term is added to discrete fluid dynamics equations even if it isn't present in the original equation. Since such terms often closely resemble second derivatives it is possible for discrete source terms to be added to the expression which allow them to appropriately transform between coordinate systems. When everything is put together, the result is a system of 5 or 8 times $N$ nonlinear equations:

\begin{equation}\label{EulerDiscrete}
    \left\{ \left[ \begin{matrix} CD_\beta f^\beta_{\rho} \\ CD_\beta f^{k\beta}_{\pmb{V}} \\ CD_\beta f^\beta_{e} \end{matrix} \right]_{,i} + Visc\left(\left[ \begin{matrix} \rho \\ V^k \\ e \end{matrix} \right]_{,i}\right)= \pmb{0}\right\}_{i=1}^N  
\end{equation}

or

\begin{equation}\label{MHDDiscrete}
    \left\{ \left[ \begin{matrix} CD_\beta f^\beta_{\rho} \\ CD_\beta f^{k\beta}_{\pmb{V}} \\ CD_\beta f^\beta_{e} \\ CD_\beta f^{k\beta}_{\pmb{B}} \end{matrix} \right]_{,i} + \left[ \begin{matrix} 0 \\ \frac{1}{\mu}B^k(CD_\beta B^\beta) \\ \frac{1}{\mu}(\pmb{V}\cdot\pmb{B})(CD_\beta B^\beta) \\ V^k(CD_\beta B^\beta) \end{matrix} \right]_{,i} + Visc\left(\left[ \begin{matrix} \rho \\ V^k \\ e \\ B^k \end{matrix} \right]_{,i}\right) = \pmb{0} \right\}_{i=1}^N 
\end{equation}

In this project a five point central stencil was used to derive the discrete differential operator. This stencil is high order and symmetric, and avoids the problem of odd-even decoupling. The coefficients for this operator are the standard finite difference coefficients given here:

\begin{equation}
    D_1f_{,i} = \frac{1}{12}f_{,i-2} + \frac{-2}{3}f_{,i-1} + \frac{2}{3}f_{,i+1} + \frac{-1}{12}f_{,i+2}
\end{equation}

and

\begin{equation}
    D_2f_{,i} = \frac{1}{12}f_{,i-2W} + \frac{-2}{3}f_{,i-W} + \frac{2}{3}f_{,i+W} + \frac{-1}{12}f_{,i+2W}
\end{equation}

We have suppressed the $\frac{1}{\Delta\xi^i}$ for clarity, and because we will generally be assuming that $\Delta\xi^i=1$.

To come up with a viscous operator, we consider the viscous part of equation \eqref{ManifoldCS}. To simplify this expression, a zero order slope approximation is used, and the maximum wave speeds are replaced with a constant, tunable viscous parameter $C_{\text{visc}}$. The resulting operators before accounting for curvature are: 

\begin{equation}
    Visc_{1}(u_{,i}) = C_{\text{visc}}\left[ -u_{,i-1} + 2u_{,i} -u_{,i+1} \right]
\end{equation}

and

\begin{equation}
    Visc_{2}(u_{,i})= C_{\text{visc}}\left[ -u_{,i-W} + 2u_{,i} -u_{,i+W} \right]
\end{equation}

The total viscous term would then be the sum of the operators in each direction:

\begin{equation}
    Visc(u_{,i}) = Visc_{1}(u_{,i}) + Visc_{2}(u_{,i})
\end{equation}

A covariant version of this operator can be derived the same as if it were a derivative operator. We point out that this viscous term, like the viscous term in equation \eqref{ManifoldCS}, will not be exactly like a Laplacian operator. It is more accurately an averaging operator. Furthermore, since the coefficients which define the operator sum to zero, theorem \ref{PreserveZeroThm} applies, meaning that for any tensor whose components are uniform in a Cartesian coordinate system, the covariant operator will evaluate to zero on the components in the curved system. 

Putting these stencils together admits the conservation form:

\begin{multline}\label{FullConservationForm}
    (\Delta_1 F)_{,i} = \left(\frac{-1}{12}f_{,i-1} + \frac{7}{12}f_{,i} + \frac{7}{12}f_{,i+1} +  \frac{-1}{12}f_{,i+2}\right) + C_{\text{visc}}(u_{,i}-u_{,i+1})\\ - \left[\left(\frac{-1}{12}f_{,i-2} + \frac{7}{12}f_{,i-1} + \frac{7}{12}f_{,i} + \frac{-1}{12}f_{,i+1}\right) + C_{\text{visc}}(u_{,i-1}-u_{,i})\right] \\ = F^+-F^-
\end{multline}

and likewise for $(\Delta_2F)_{,i}$. After inserting the source terms to account for curvature, the method will no longer be conservative in the strictest sense, but it will capture the appropriate behavior of the equations.

The left and right boundaries of the mesh are periodic, and thus there will always be enough neighboring cells to complete the stencil. At the top and bottom boundary however, some cells will be missing. In the second-from-top and second-from-bottom rows of the mesh, the $+2W$ and $-2W$ cells respectively are missing, and in the bottom row of the mesh the $-W$ and $-2W$ cells are missing. The top row of the mesh has fixed values and therefore does not depend on neighboring cells. In the bottom row of the mesh a three point forward difference stencil and a two point average are used. In the second-from-top and second-from-bottom rows a four point difference stencil with a backward and forward bias respectively are used, and the same viscous averaging operator can be used since all the necessary cells exist. These operators are given here:

Second from the top:

\begin{equation}
    D_2f_{,i} =  \frac{1}{6}f_{,i-2W} + (-1)f_{,i-W} + \frac{1}{2}f_{,i} + \frac{1}{3}f_{,i+W}
\end{equation}

Second from the bottom boundary:

\begin{equation}
    D_2f_{,i} = \frac{-1}{3}f_{,i-W} + \frac{-1}{2}f_{,i} + f_{,i+W} + \frac{-1}{6}f_{,i+2W}
\end{equation}

At the bottom boundary:

\begin{equation}
    D_2f_{,i} = \frac{-3}{2}f_{,i} + 2f_{,i+W} + \frac{-1}{2}f_{,i+2W}
\end{equation}

and

\begin{equation}
    Visc_{2}(u_{,i}) = C_{\text{visc}}\left[ u_{,i} -u_{,i+W} \right]
\end{equation}

In some situations it will be possible to pick values in ghost cells outside the boundaries of the mesh such that the expressions resulting from these operators can be considered to be in the same form as \eqref{FullConservationForm}. It is however difficult to guarantee that this will always be possible. Fortunately, the regions in which these operators are used are well clear of the main bow shock wave, and though body shocks occur, they will mostly follow the $\xi^2$ coordinate lines and so will not affect differencing in the $\xi^2$ direction \cite{Guan,sriThesis,ShockFreeCrossFlow}. It is therefore acceptable for these operators to be non-conservative.

\subsection{Preservation of steady state}

The goal of these conical flow problems is to solve for a steady flow that satisfies the equations. The goal of the discrete equations is to capture the steady state numerically. Though we cannot describe all steady state solutions, we do know a subset of them and can verify that the discretization is capable of accurately capturing them. In the case where there are no walls or boundaries, it is known that uniform values of all variables satisfies the equations. By theorem \ref{PreserveZeroThm}, it is easily shown that this discretization of the conical equations exactly captures these solutions - that is any set of uniform density, uniform energy, uniform velocity, and (in the case of MHD) uniform magnetic field satisfy the discrete equations to machine precision.

\section{Solution procedure}\label{sec:SP}

An algorithm based on Newton's method was developed to solve the system of nonlinear equations. This method iteratively solves a linearized form of the nonlinear system of equations set equal to zero. After each iteration, the equations are closer to being solved assuming certain conditions are met, involving smoothness and closeness to the solution.

Let $F$ be a vector of nonlinear functions such as the residual of the discretization of our differential equations, and let $U$ be a vector of all the variables on which $F$ depends. By a Taylor expansion, we get

\begin{equation}\label{expansion}
    F\approx F(U) + \frac{\partial F}{\partial U}\Delta U
\end{equation}

when $\Delta U$ is small. Since we are interested in $F=0$ we have:

\begin{equation}\label{NewtonExpression}
    \frac{\partial F}{\partial U}\Delta U = -F(U)
\end{equation}

which can be solved for $\Delta U$. If $F(U)$ is close enough to $F=0$, then $F(U+\Delta U)$ should be even closer. This process can be repeated until a value of $U$ is achieved such that $F$ is satisfactorily close to zero.

By applying this process to the residual of the discretized system of equations, we can iterate to a solution, starting from an initial guess. Since the discretization given is known to be satisfied by a uniform solution, it is convenient to take such as the initial guess. In particular, the whole domain is set to the free stream values. The residual is thus zero everywhere, but the boundary conditions at the wall are not satisfied. To keep the residual close to zero, the algorithm slowly increments the boundary variables towards their specified values. After each incrementation, Newton's method is employed to relax the residual back down to within a desired tolerance of zero. Thus the residual can be kept close to zero always, and the solution will be achieved once the boundary conditions are satisfied and a last round of Newton's method has relaxed the residual back to zero. 

A pseudocode of applying this algorithm to the conical Euler equations is given in algorithm \ref{EulerAlgo}. For the Euler equations, the boundary conditions at the wall are that the $v^2$ component of the velocity is zero. Algorithm \ref{EulerAlgo} uses a linear incrementation of $v^2$, but other non-uniform increments could also be used.

\begin{algorithm}
\caption{Solve Conical Euler Equations}\label{EulerAlgo}
\begin{algorithmic}[1]
\State $U \gets U_\infty$
\For{$\textit{inc}=1 \text{ to } \textit{numIncrements}$}
    \State $v^2_{,1:W} = \frac{\textit{numIncrements} - \textit{inc}}{\textit{numIncrements}}v^2_\infty$
    \For{$\textit{it}=1 \text{ to }\textit{maxIt}$}
        \State $\text{compute }\textit{Res}$
        \If{$|\textit{Res}|<\textit{tol}$}
            \State break
        \Else
            \State $\text{compute }\frac{\partial \textit{Res}}{\partial U}$
            \State $\text{solve }\frac{\partial \textit{Res}}{\partial U}\Delta U = -\textit{Res}$
            \State $U \gets U + \Delta U$
        \EndIf
    \EndFor
\EndFor
\end{algorithmic}
\end{algorithm}

The residual for the conical Euler equations is from equation \eqref{EulerDiscrete}:

\begin{equation}
    Res_{,i} = \left[ \begin{matrix} CD_\beta f^\beta_{\rho} \\ CD_\beta f^{k\beta}_{\pmb{V}} \\ CD_\beta f^\beta_{e} \end{matrix} \right]_{,i} + Visc\left(\left[ \begin{matrix} \rho \\ V^k \\ e \end{matrix} \right]_{,i}\right)  
\end{equation}

Since the difference and averaging operators are all linear, the Jacobian of the residual can be computed as:

\begin{equation}
    \frac{\partial \textit{Res}}{\partial U}_{,i} = \left[ \begin{matrix} CD_\beta \frac{\partial f^\beta_{\rho}}{\partial U} \\ CD_\beta \frac{\partial f^{k\beta}_{\pmb{V}}}{\partial U} \\ CD_\beta \frac{\partial f^\beta_{e}}{\partial U} \end{matrix} \right]_{,i} + Visc\left(I\right)
\end{equation}

where $I$ is the identity matrix.

Solving the MHD equations can be done in a similar way, but with a few modifications. The first is simple which is incrementing the $\xi^2$ component of the magnetic field to zero at the wall same as is done to that component of the velocity. The second modification is more significant.

Accompanying equation \eqref{MHDCon} is the requirement that the divergence of the magnetic field is identically zero. This constraint is not however explicitly enforced, allowing for the possibility that a there exists a solution involving a magnetic field which is not divergence free. Numerical tests have demonstrated that for time dependent Ideal MHD with Powell source terms, the numerical divergence is kept small if initial data is divergence free. Since a Newton's method does not march in the time-like direction, these observations unfortunately do not apply. No prior work exists on the conical Ideal MHD equations, and so a strategy to impose this constraint had to be developed from scratch. The most straightforward approach to ensure that the magnetic field remained divergence-less throughout the iteration process was to convert the linear solve portion of Newton's method into a constrained minimization problem.

To do this, first expression \eqref{NewtonExpression} is changed to:

\begin{equation}\label{NewtonAltExpression}
    \frac{\partial F}{\partial U}\Delta U = -F \Leftrightarrow \frac{\partial F}{\partial U} U_{\text{next}} = \frac{\partial F}{\partial U} U - F
\end{equation}

which is an equivalent expression, but can be solved directly for $U_{\text{next}}$. Instead of solving this expression exactly, we instead minimize:

\begin{equation}
    \frac{1}{2}\Bigg\lVert\frac{\partial F}{\partial U} U_{\text{next}} - \left(\frac{\partial F}{\partial U} U - F\right)\Bigg\rVert_2^2
\end{equation}

subject to the constraint that the magnetic field be divergence-less. This constraint can be stated mathematically as a linear equation:

\begin{equation}\label{ConstraintHomo}
    (\text{div}B) U = \pmb{0}
\end{equation}

where $(\text{div}B)$ is a matrix which applies the contracted covariant derivative operator to the magnetic field variables. With this, the ``linear solve'' step in the algorithm is replaced with the linearly constrained least squares problem:

\begin{equation}
    \min_{U_{\text{next}}}\frac{1}{2}\Bigg\lVert\frac{\partial F}{\partial U} U_{\text{next}} - \left(\frac{\partial F}{\partial U} U - F\right)\Bigg\rVert_2^2 \text{ s.t. } (\text{div}B) U_{\text{next}} = \pmb{0}
\end{equation}

Fortunately, this problem is straight forward to solve. Using the method of Lagrange multipliers, it is easily shown that the solution is achieved by solving the system:

\begin{equation}
     \left[ \begin{matrix} 2\frac{\partial F}{\partial U}^T\frac{\partial F}{\partial U} & (\text{div}B)^T \\ (\text{div}B) & 0 \end{matrix} \right]\left[ \begin{matrix} U_{\text{next}} \\ \lambda \end{matrix} \right] = \left[ \begin{matrix} 2\frac{\partial F}{\partial U}^T\left(\frac{\partial F}{\partial U} U - F\right) \\ \pmb{0} \end{matrix} \right]
\end{equation}

where $\lambda$ is a vector or Lagrange multipliers, the value of which is irrelevant. There are other ways to solve the constrained minimzation problem based on the QR or SVD factorizations, but this one was satisfactory in practice. Algorithm \ref{MHDAlgo} provides a pseudo code of the modified Newton's Method for the MHD equations.

\begin{algorithm}
\caption{Solve Conical MHD Equations}\label{MHDAlgo}
\begin{algorithmic}[1]
\State $U \gets U_\infty$
\For{$\textit{inc}=1 \text{ to } \textit{numIncrements}$}
    \State $v^2_{,1:W} = \frac{\textit{numIncrements} - \textit{inc}}{\textit{numIncrements}}v^2_\infty$
    \State $b^2_{,1:W} = \frac{\textit{numIncrements} - \textit{inc}}{\textit{numIncrements}}b^2_\infty$
    \For{$\textit{it}=1 \text{ to }\textit{maxIt}$}
        \State $\text{compute }\textit{Res}$
        \If{$|\textit{Res}|<\textit{tol}$}
            \State break
        \Else
            \State $\text{compute }\frac{\partial \textit{Res}}{\partial U}$
            \State $\text{solve }\min\frac{1}{2}||\frac{\partial \textit{Res}}{\partial U} U_{\text{next}} - \left(\frac{\partial \textit{Res}}{\partial U} U -\textit{Res}\right)||_2^2\text{ s.t. }(\text{div}B) U_{\text{next}}=\pmb{0}$
            \State $U \gets U_{\text{next}}$
        \EndIf
    \EndFor
\EndFor
\end{algorithmic}
\end{algorithm}

Since the divergence-less constraint on the magnetic field is always satisfied, the residual for the conical MHD equations from equation \eqref{MHDDiscrete} is given by:

\begin{equation}
    Res_{,i} = \left[ \begin{matrix} CD_\beta f^\beta_{\rho} \\ CD_\beta f^{k\beta}_{\pmb{V}} \\ CD_\beta f^\beta_{e} \\ CD_\beta f^{k\beta}_{\pmb{B}} \end{matrix} \right]_{,i} + Visc\left(\left[ \begin{matrix} \rho \\ V^k \\ e \\ B^k \end{matrix} \right]_{,i}\right)
\end{equation}

and the Jacobian is:

\begin{equation}
    \frac{\partial \textit{Res}}{\partial U}_{,i} = \left[ \begin{matrix} CD_\beta \frac{\partial f^\beta_{\rho}}{\partial U} \\ CD_\beta \frac{\partial f^{k\beta}_{\pmb{V}}}{\partial U} \\ CD_\beta \frac{\partial f^\beta_{e}}{\partial U} \\ CD_\beta \frac{\partial f^{k\beta}_{\pmb{B}}}{\partial U} \end{matrix} \right]_{,i} + Visc\left(I\right)
\end{equation}

\begin{remark}
    In the case of solving the conical MHD equations with the magnetic field set identically to zero, then algorithm \ref{MHDAlgo} reduces to algorithm \ref{EulerAlgo}.
\end{remark}

\begin{remark}
    Depending on how one chooses to enforce boundary conditions, it may be necessary to add them as constraints in the constrained minimization problem. For example, if the boundary variables are stored in $U$ along with all the other variables, and their values are forced by inserting the equations $IU_{,i}=U_{\text{boundary},i}$ (where $I$ is the identity matrix and cell $i$ is a boundary cell) into the linear solves, then it is possible that the solution to the minimization problem will have values other than those desired at the boundary. Augmenting the constraint equation \eqref{ConstraintHomo} to $(\text{div}B) U_{\text{next}}=Z$ which includes $IU_{,i}=U_{\text{boundary},i}$ guarantees that the boundary values will be what they are meant to be. To solve this modified formulation, one solves the system:
    \begin{equation}
     \left[ \begin{matrix} 2\frac{\partial F}{\partial U}^T\frac{\partial F}{\partial U} & (\text{div}B)^T \\ (\text{div}B) & 0 \end{matrix} \right]\left[ \begin{matrix} U_{\text{next}} \\ \lambda \end{matrix} \right] = \left[ \begin{matrix} 2\frac{\partial F}{\partial U}^T\left(\frac{\partial F}{\partial U} U - F\right) \\ Z \end{matrix} \right]
    \end{equation}
\end{remark}

\begin{remark}
    A trade-off had to be made in enforcing the discrete divergence-free constraint. The flux-divergence form, or ``conservation form'' of the MHD equations is only valid if certain terms proportional to or involving the divergence of the magnetic field are equal to zero. These terms result from using vector calculus identities to manipulate Maxwell's equations. Discrete forms of these terms which are consistent with the MHD equations, and Maxwell's equations, and the vector calculus identities would not be linear expressions. To force these to be zero would require nonlinear constraint equations which are more difficult to satisfy. Instead of doing that, it was decided to use the simpler linear constraints \eqref{ConstraintHomo} which are consistent in the limit of zero mesh spacing, but result in some numerical inconsistency.
\end{remark}

\section{Results and discussion}\label{sec:RD}

The numerical method so far described, involving the discrete Christoffel symbols and the Newton's method was coded in Octave and was run on a variety of test cases. We present here some examples of solutions which it produced. Those included are designed to highlight the capabilities of the method more than to apply to any particular aerospace application.

\subsection{Gas properties}

So far, no assumptions have been made about the thermodynamic properties of the fluid being governed by the equations. We are therefore free to apply any valid pressure and temperature models without creating conflicts in the governing equations or numerical method. It was chosen however to assume the gas was perfect in the coming examples in order to avoid unnecessary complexity that might obfuscate characteristics of the method. The pressure of the gas is thus computed by the ideal gas law:
 
 \begin{equation}
     P=(\gamma-1)\rho e
 \end{equation}
 
 where $\gamma$ is the ratio of specific heats, $c_p$ and $c_v$, at constant pressure and volume respectively (the value of $\gamma$ is 1.4 for regular air). The specific heats are assumed constant, which results in the relationship:
 
 \begin{equation}
     e = c_vT
 \end{equation}

where $T$ is the temperature of the gas. Furthermore, the gas constant $R=c_p-c_v$ can be defined.

\subsection{Non-dimensionalization}

It is generally preferable in fluid dynamics to solve non-dimensionalized versions of the governing equations. To this end, we introduce the non-dimensional variables:

\begin{equation}
    \rho_* = \rho/\rho_\infty
\end{equation}
\begin{equation}
    V^i_* = V^i/|\pmb{V}_\infty|
\end{equation}
\begin{equation}
    e_* = e/|\pmb{V}_\infty|^2
\end{equation}
\begin{equation}
    B^i_* = B^i/\left(\sqrt{\rho_\infty\mu}|\pmb{V}_\infty|\right)
\end{equation}

\begin{equation}
    P_* = P/\left(\rho_\infty|\pmb{V}_\infty|^2\right)
\end{equation}

where the subscript $\infty$ refers to the free stream value. Additionally we have:

\begin{equation}
    E_* = E/|\pmb{V}_\infty|^2 = e_* + \frac{1}{2}|\pmb{V}_*|^2
\end{equation}

and for an ideal gas, there is the relationship:

\begin{equation}
    P_* = P(\rho_*,e_*)
\end{equation}

The nondimensionalized version of equation \eqref{EulerCon} is:

\begin{subequations}\label{EulerConNonDim}
\begin{gather}
\left(\rho_* V_*^\beta\right)_{|\beta} = 0 \\
\left(\rho_* V_*^i V_*^\beta + G^{i\beta}P_*\right)_{|\beta} = 0\\
\left( \left[\rho_* E_*+P_*\right] V_*^\beta \right)_{|\beta} =0 
\end{gather}
\end{subequations}

And of equation \eqref{MHDCon}:

\begin{subequations}\label{MHDConNonDim}
\begin{gather}
    \left(\rho_* V_*^\beta\right)_{|\beta} = 0  \\
   \left(\rho_* V_*^iV_*^\beta - B_*^iB_*^\beta + G^{i\beta}\left(P_* + \frac{|\pmb{B}_*|^2}{2}\right)\right)_{|\beta} = -B_*^iB^\beta_{*|\beta} 
    \\
    \left( \left(\rho_* E_*+P_*+|\pmb{B}_*|^2\right)V_*^\beta - (\pmb{V}_*\cdot\pmb{B}_*)B_*^\beta \right)_{|\beta} = -(\pmb{V}_*\cdot\pmb{B}_*)B^\beta_{*|\beta} 
    \\
    (V_*^\beta B_*^i-V_*^iB_*^\beta)_{|\beta} = -V_*^iB^\beta_{*|\beta}
\end{gather}
\end{subequations}

\subsection{Free stream conditions}

The outermost row of mesh cells is used to impose the free stream conditions on the solution. These are imposed via Dirichlet conditions on the non-dimensional variables. For this project, free stream conditions were assumed to be uniform and constant. The values of the variables were set according to the desired angle of attack, angle of roll, and Mach number of the cone, and the relationship between the air stream and the magnetic field.

The definition of $\rho_*$ requires that it always has a value of one in the free stream. Likewise, the magnitude of the vector $\pmb{V}_*$ is always equal to one in the free stream. The direction of $\pmb{V}_*$ is determined by the angle of attack and roll of the cone. Since the cone is assumed to be aligned with the $z$ axis, the Cartesian representation of the dimensionless free stream velocity is given by:

\begin{equation}
    \pmb{\tilde{V}}_{*\infty} = \left[ \begin{smallmatrix} \cos Roll & -\sin Roll & 0 \\ \sin Roll & \cos Roll & 0 \\ 0 & 0 & 1 \end{smallmatrix} \right]\left[ \begin{smallmatrix} \\ 1 & 0 & 0 \\ 0 & \cos AoA & \sin AoA \\ 0 & -\sin AoA & \cos AoA \end{smallmatrix} \right]\left[ \begin{smallmatrix} 0 \\ 0 \\ 1 \end{smallmatrix} \right] = \left[ \begin{smallmatrix} -\sin Roll \sin AoA \\ \cos Roll \sin AoA  \\ \cos AoA \end{smallmatrix} \right]
\end{equation}

This vector is then transformed onto the local basis of the mesh. For a perfect gas, the value of $e_*$ is determined by the Mach number and gas constant by the expression:

\begin{equation}
    e_{*\infty} = \frac{1}{\gamma(\gamma-1)M_\infty^2}
\end{equation}

which comes from:

\begin{equation}
    e_{*\infty} = \frac{c_vT_\infty}{|\pmb{V}_\infty|} = \frac{c_vT_\infty}{c_\infty^2M_\infty^2} = \frac{c_vT_\infty}{\gamma RT_\infty M_\infty^2} = \frac{c_v}{\gamma (c_p-c_v)M_\infty^2} = \frac{1}{\gamma(\gamma-1)M_\infty^2}
\end{equation}

The direction of $\pmb{B}_{*\infty}$ can be set somewhat arbitrarily. The magnitude however, should be small enough that the magnitude of the free stream velocity remains greater than the fastest magneto acoustic speed. Otherwise, information would be able to easily propagate upstream which would invalidate the conical assumption. The fast magneto acoustic speed is given in non-dimensional form by:

\begin{equation}
    c_{f*}^2 = \frac{1}{2}\left[ \left(\frac{1}{M^2} + |\pmb{B}_{*}|^2\right) + \sqrt{\left(\frac{1}{M^2} + |\pmb{B}_{*}|^2\right)^2 - 4\frac{1}{M^2}\left(\pmb{B}_{*}\cdot\pmb{w}\right)^2} \right]
\end{equation}

where $\pmb{w}$ is a unit vector which specifies the direction of the propagating wave. This  speed should be less than the magnitude of the non-dimensional free stream velocity which is equal to one. A sufficient condition to ensure this is:

\begin{equation}
    |\pmb{B}_{*\infty}|^2 < 1-\frac{1}{M_{\infty}^2}
\end{equation}

Consequently, the additional constraints are imposed that the magnitude of the non-dimensional magnetic field must be less than that of the non-dimensional velocity, and that the free stream Mach number must be greater than one.

\subsection{Right circular cone validation}

To demonstrate the reliability of the numerical method, a series of solutions were computed for circular cones at zero angle of attack. This scenario has been thoroughly studied and properties of the solutions can be checked against tables provided by NASA \cite{NASA_Tables}. 

Cones with half angles 5, 10, and 15 degrees were modeled at speeds of Mach 1.5, 2, 3, 4, and 5. The 10 degree mesh had 80 elements in the $\xi^1$ direction whereas the 5 and 15 degree meshes only had 60. The setting of the problem is uniform in the $\xi^1$ direction so resolution in this direction was not too important. All the meshes had 100 elements in the $\xi^2$ direction. This meant that up to 40,000 variables were solved for in these experiments. Good convergence was achieved, with the $L_2$ norm of the residual being less than $10^{-9}$.

The shock wave angle, the surface to free stream density and pressure ratios and the surface Mach number were all computed based on the solutions and compared to NASA values. The results of this comparison are presented in tables \ref{ShockAngle}, \ref{DensityRatio}, \ref{PressureRatio}, and \ref{SurfaceMach}, and an example of a full solution is shown in Figure \ref{fig:ValidationExample}.

\begin{figure}
    \centering
    \includegraphics[scale=0.2]{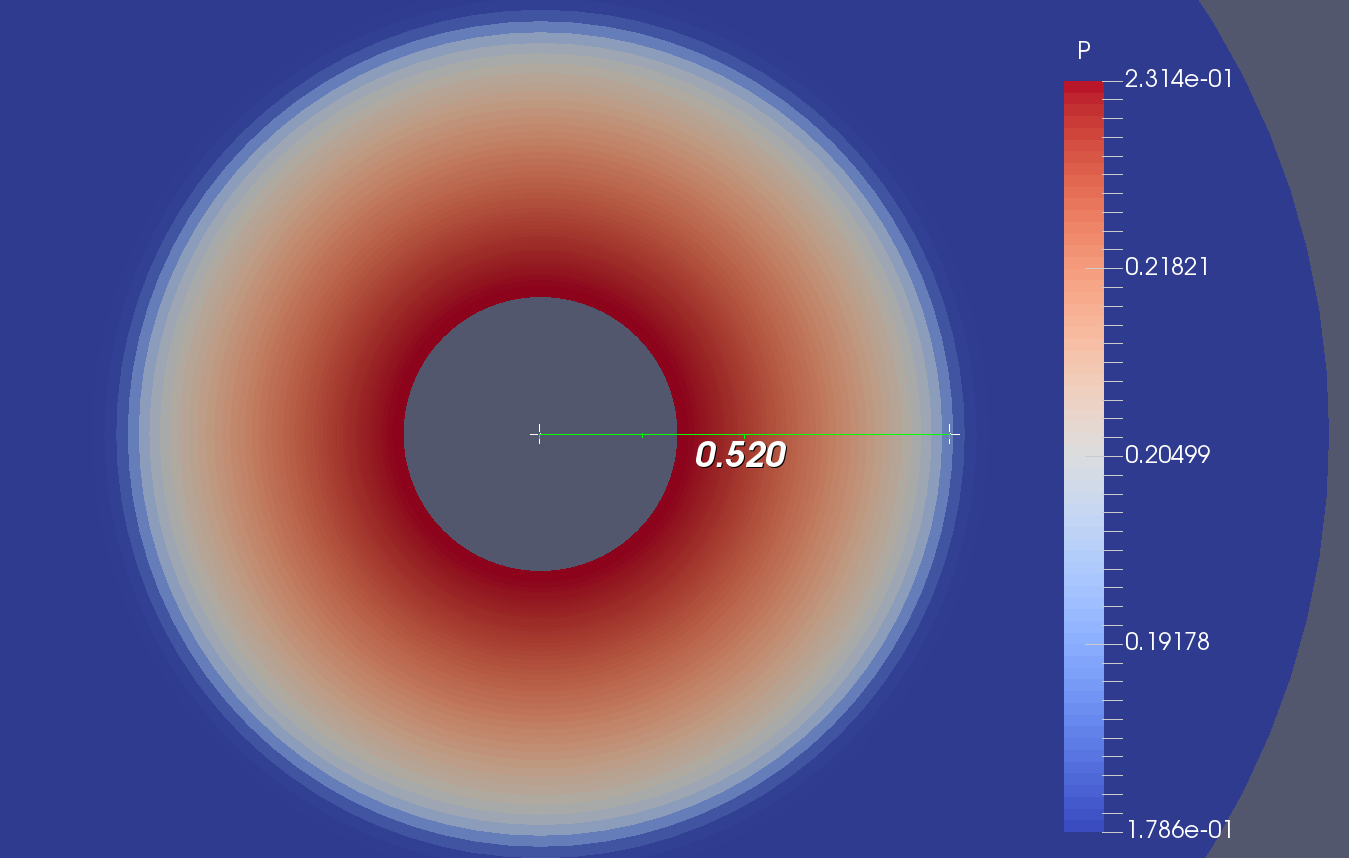}
    \caption{Example solution from validation testing. This is a 10 degree half angle cone at zero angle of attack and Mach 2. Pressure field is shown along with the distance in the XY plane to the shock wave. The angle of the shock wave is $\theta_s=\arcsin .52 = .547$. This image was rendered in ParaView.}
    \label{fig:ValidationExample}
\end{figure}

\begin{table}
\centering
\begin{tabular}{|c||c|c|c|}
\hline
 Solver  & Half Angle = 5 & 10 & 15\\
 \hline\hline
 $M_\infty$ = 1.5 & 0.734 & 0.744 & 0.789 \\ 
\hline
2 & 0.524 & 0.547 & 0.600 \\ 
\hline
3 & 0.347 & 0.379 & 0.444 \\ 
\hline
4 & 0.268 & 0.309 & 0.384 \\ 
\hline
5 & n/a & 0.273 & n/a \\
 \hline
 \hline\hline
  NASA  & Half Angle = 5 & 10 & 15 \\
 \hline\hline
$M_\infty$ = 1.5 & 0.731 & 0.745 & 0.786 \\ 
\hline
2 & 0.525 & 0.545 & 0.592 \\ 
\hline
3 & 0.344 & 0.379 & 0.441 \\ 
\hline
4 & 0.261 & 0.309 & 0.380 \\ 
\hline
5 &  & 0.272 & \\ 
 \hline
  \hline\hline
Absolute \% error & Half Angle = 5 & 10 & 15 \\
\hline\hline
$M_\infty$ = 1.5 & 0.469 & 0.132 & 0.455 \\ 
\hline
2 & 0.314 & 0.432 & 1.438 \\ 
\hline
3 & 0.819 & 0.002 & 0.826 \\ 
\hline
4 & 2.744 & 0.059 & 1.072 \\ 
\hline
5 &  & 0.358 &  \\ 
\hline
\end{tabular}
\caption{Shock wave angle prediction. Angles are presented in radians}\label{ShockAngle}
\end{table}

\begin{table}
\centering
\begin{tabular}{|c||c|c|c|}
\hline
 Solver & Half Angle = 5 & 10 & 15\\
 \hline\hline
 $M_\infty$ = 1.5 & 1.047 & 1.137 & 1.261 \\ 
\hline
2 & 1.071 & 1.203 & 1.382 \\ 
\hline
3 & 1.132 & 1.370 & 1.687 \\ 
\hline
4 & 1.207 & 1.576 & 2.054 \\ 
\hline
5 & n/a & 1.805 & n/a \\
 \hline
 \hline\hline
  NASA & Half Angle = 5 & 10 & 15 \\
 \hline\hline
$M_\infty$ = 1.5 & 1.044 & 1.136 & 1.257 \\ 
\hline
2 & 1.067 & 1.201 & 1.377 \\ 
\hline
3 & 1.124 & 1.368 & 1.685 \\ 
\hline
4 & 1.193 & 1.571 & 2.047 \\ 
\hline
5 &  & 1.802 &  \\
 \hline
  \hline\hline
Absolute \% error & Half Angle = 5 & 10 & 15 \\
\hline\hline
$M_\infty$ = 1.5 & 0.265 & 0.116 & 0.294 \\ 
\hline
2 & 0.374 & 0.158 & 0.351 \\ 
\hline
3 & 0.705 & 0.167 & 0.122 \\ 
\hline
4 & 1.133 & 0.307 & 0.355 \\ 
\hline
5 &  & 0.156 &  \\
\hline
\end{tabular}
\caption{Ratio of surface density to free stream density}\label{DensityRatio}
\end{table}

\begin{table}
\centering
\begin{tabular}{|c||c|c|c|}
\hline
 Solver & Half Angle = 5 & 10 & 15\\
 \hline\hline
 $M_\infty$ = 1.5 & 1.067 & 1.197 & 1.386 \\ 
\hline
2 & 1.102 & 1.296 & 1.587 \\ 
\hline
3 & 1.190 & 1.558 & 2.111 \\ 
\hline
4 & 1.299 & 1.916 & 2.847 \\ 
\hline
5 & n/a & 2.387 & n/a \\
 \hline
 \hline\hline
  NASA & Half Angle = 5 & 10 & 15 \\
 \hline\hline
$M_\infty$ = 1.5 & 1.062 & 1.195 & 1.378 \\ 
\hline
2 & 1.095 & 1.292 & 1.566 \\ 
\hline
3 & 1.178 & 1.551 & 2.091 \\ 
\hline
4 & 1.281 & 1.889 & 2.801 \\ 
\hline
5 &  & 2.309 &  \\ 
 \hline
  \hline\hline
Absolute \% error & Half Angle = 5 & 10 & 15 \\
\hline\hline
$M_\infty$ = 1.5 & 0.435 & 0.209 & 0.542 \\ 
\hline
2 & 0.626 & 0.244 & 1.348 \\ 
\hline
3 & 1.047 & 0.448 & 0.961 \\ 
\hline
4 & 1.404 & 1.419 & 1.661 \\ 
\hline
5 &  & 3.348 &  \\
\hline
\end{tabular}
\caption{Ratio of surface pressure to free stream pressure}\label{PressureRatio}
\end{table}

\begin{table}
\centering
\begin{tabular}{|c||c|c|c|}
\hline
 Solver & Half Angle = 5 & 10 & 15\\
 \hline\hline
 $M_\infty$ = 1.5 & 1.486 & 1.462 & 1.431 \\ 
\hline
2 & 1.972 & 1.927 & 1.872 \\ 
\hline
3 & 2.925 & 2.813 & 2.683 \\ 
\hline
4 & 3.856 & 3.642 & 3.410 \\ 
\hline
5 & n/a & 4.406 & n/a \\
 \hline
 \hline\hline
  NASA & Half Angle = 5 & 10 & 15 \\
 \hline\hline
$M_\infty$ = 1.5 & 1.458 & 1.375 & 1.271 \\ 
\hline
2 & 1.942 & 1.834 & 1.707 \\ 
\hline
3 & 2.891 & 2.710 & 2.507 \\ 
\hline
4 & 3.816 & 3.531 & 3.217 \\ 
\hline
5 &  & 4.292 &  \\
 \hline
  \hline\hline
Absolute \% error & Half Angle = 5 & 10 & 15 \\
\hline\hline
$M_\infty$ = 1.5 & 1.925 & 6.338 & 12.612 \\ 
\hline
2 & 1.570 & 5.068 & 9.674 \\ 
\hline
3 & 1.163 & 3.795 & 7.031 \\ 
\hline
4 & 1.036 & 3.156 & 6.009 \\ 
\hline
5 &  & 2.652 &  \\ 
\hline
\end{tabular}
\caption{Surface Mach number}\label{SurfaceMach}
\end{table}

Results at Mach 5 were not able to be achieved for the 5 and 15 degree cones. As the Newton's method was iterating to a solution, spurious oscillations began to arise which eventually destabilized the solution to the point that it blew up. Attempts were made to suppress these oscillations by increasing $C_{\text{visc}}$, however when enough viscosity was added to achieve stability, the solutions were overly damped and non-physical. The stable capture of shock waves without sacrificing resolution is a difficult problem in fluid dynamics for which many difficult numerical methods have been devised. Evaluation and implementation of these was however beyond the scope of this project. 

The stability of the solution was also observed to depend to some degree on the quality of the mesh. It is thus possible that if a more sophisticated mesh were designed, either up front or via an adaptive mesh method, that the steeper gradients could be better handled. 

When solutions were achieved they provided results which matched well with the values from the NASA tables. The surface Mach number consistently had the highest error, with a maximum of about 12\%. Such consistency demonstrates the validity of the derivation of the sources terms which model the curvature of the discrete manifold.

\subsection{Additional Validation}

Other cases of conical Euler flow were solved to compare to previous work on the subject not limited to right circular cones. Sritharan \cite{sriThesis} provided plots of the pressure coefficient from a 10 degree half angle cone at 10 degrees angle of attack and Mach 2. The same case was run in the solver developed here and comparison plots were made. The mesh used was the same in the previous section.

\begin{figure}
    \centering
    \includegraphics[scale=0.45]{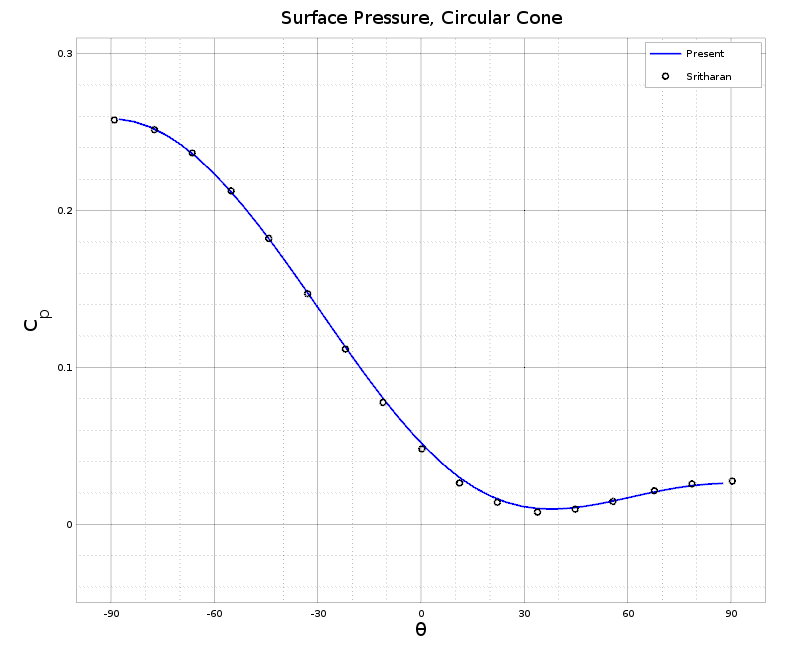}
    \caption{10 degree half angle cone at 10 degrees angle of attack and Mach 2. Pressure coefficient plotted around the surface of the cone.}
    \label{fig:CircleSurfaceComparison}
\end{figure}

Figure \ref{fig:CircleSurfaceComparison} shows the pressure coefficient around the surface of the cone, and Figure \ref{fig:CirclePhiComparison} shows the pressure coefficient plotted along 3 different curves in the $\phi$ direction outward from the surface of the cone. The graphs show very good agreement in value. The shock waves are not as sharply resolved by this method, particularly when the shock is weaker. The position of the shock waves is consistent though, and so is the jump across them.

\begin{figure}
    \centering
    \includegraphics[scale=0.45]{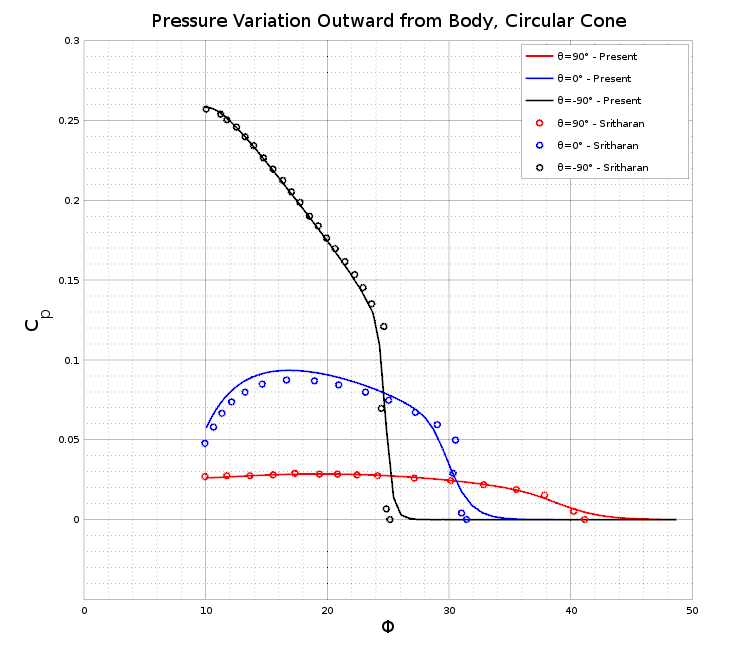}
    \caption{10 degree half angle cone at 10 degrees angle of attack and Mach 2. Pressure coefficient plotted outward from the surface of the cone.}
    \label{fig:CirclePhiComparison}
\end{figure}

In \cite{Siclari}, Siclari provides a plot of the surface pressure coefficient for an elliptic cone with a sweep angle of 71.61 degrees and 6 to 1 aspect ratio. This cone was set at an angle of attack of 10 degrees and Mach 1.97. A comparison plot of the present method is given in Figure \ref{fig:6to1Comparison}. Results were acquired using a mesh with 320 cells in the $\xi^1$ direction and 50 in the $\xi^2$ direction.

\begin{figure}
    \centering
    \includegraphics[scale=0.55]{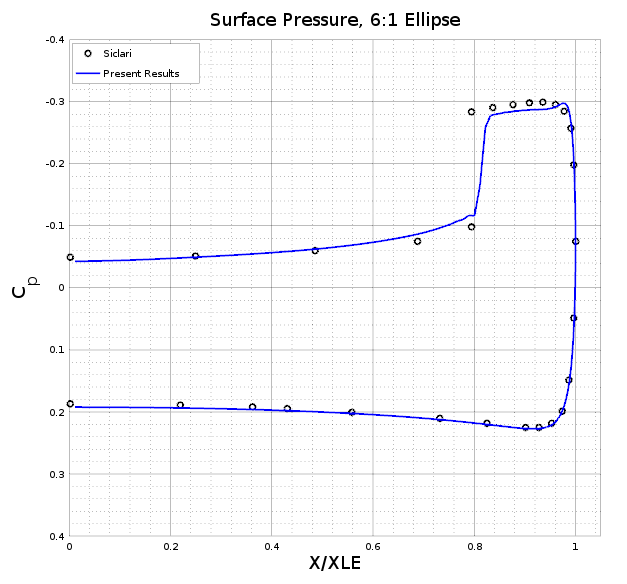}
    \caption{6:1 elliptic cone at 10 degrees angle of attack and Mach 2. Pressure coefficient plotted along the surface of the cone. The $x$-axis is scaled by the wingspan.}
    \label{fig:6to1Comparison}
\end{figure}

The technique Siclari used to compute the solution was a shock fitting method and so had a very sharply resolved body shock. The shock is still captured well by the present method, and the plots show good agreement everywhere else as well.

\subsection{Other Euler results}

We now present some more examples of solutions produced by the described method. Meshes used in this section had between 30,000 and 40,000 total variables to be solved for, and in every case good convergence was still achieved with the $L_2$ norm of the residual being less than $10^{-9}$.

First we consider the velocity field around a circular cone at an angle of attack. Figure \ref{fig:M1p5_AoA5} shows the flow around a 10 degree cone at 5 degrees angle of attack and Mach 1.5. The mesh used was the same 80 by 100 mesh used for the 10 degree cone validation tests. There is clearly higher pressure on the windward surface of the cone than on the leeward side. In addition, the crossflow stream lines wrap around the body and converge on the top surface of the cone as predicted by \cite{sriThesis,Siclari,NASA_con}.

\begin{figure}
    \centering
    \includegraphics[scale=0.3]{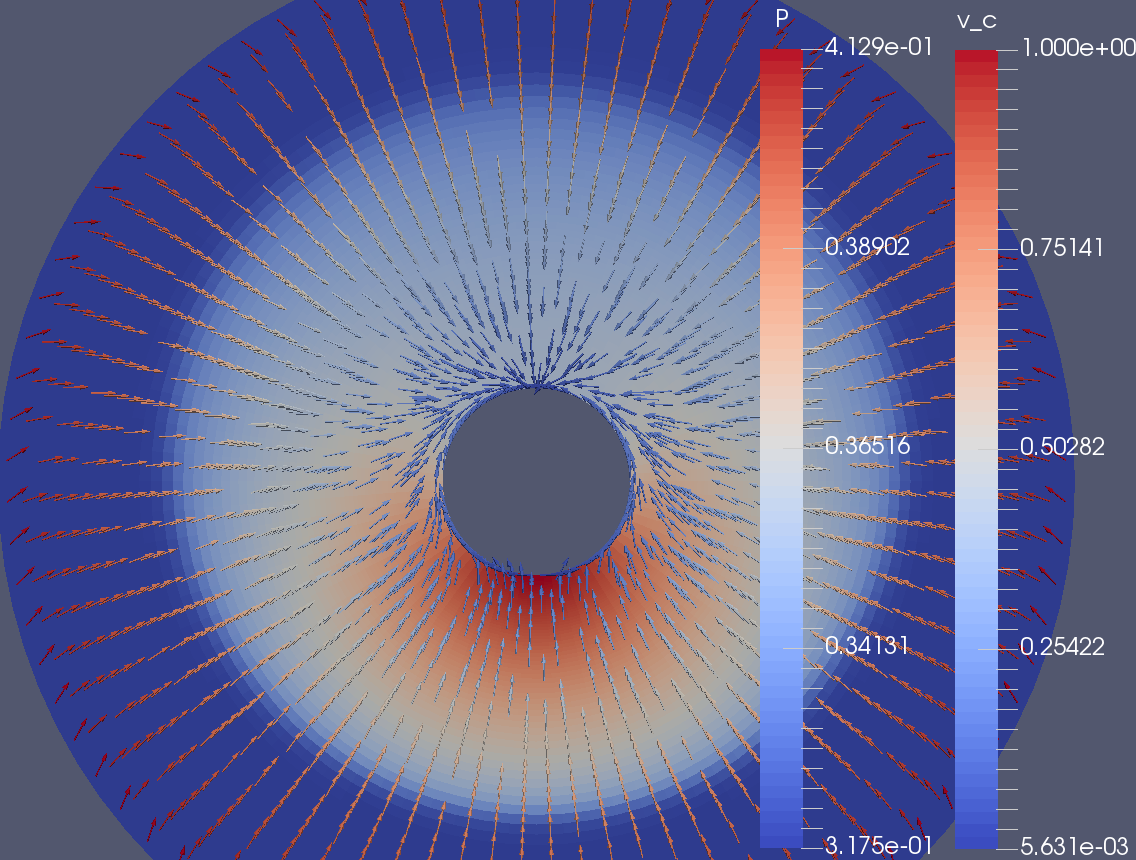}
    \caption{10 degree half angle cone at 5 degrees angle of attack and Mach 1.5. Pressure field is shown along with the crossflow velocity.}
    \label{fig:M1p5_AoA5}
\end{figure}

In Figure \ref{fig:M2_AoA20}, the angle of attack has been increased to 20 degrees and the free stream Mach number has been increased to 2. In this case, the increase in pressure on the windward side is even greater compared to the free stream pressure, and the convergence point of the crossflow stream lines has been lifted off the surface of the cone. Furthermore, we see in Figure \ref{fig:M2_AoA20_Mc} that supersonic crossflow bubbles have formed on either side of the surface of the cone. These are related to the change of type of the governing PDE model, and are consistent with the theory of this problem.

\begin{figure}
    \centering
    \includegraphics[scale=0.225]{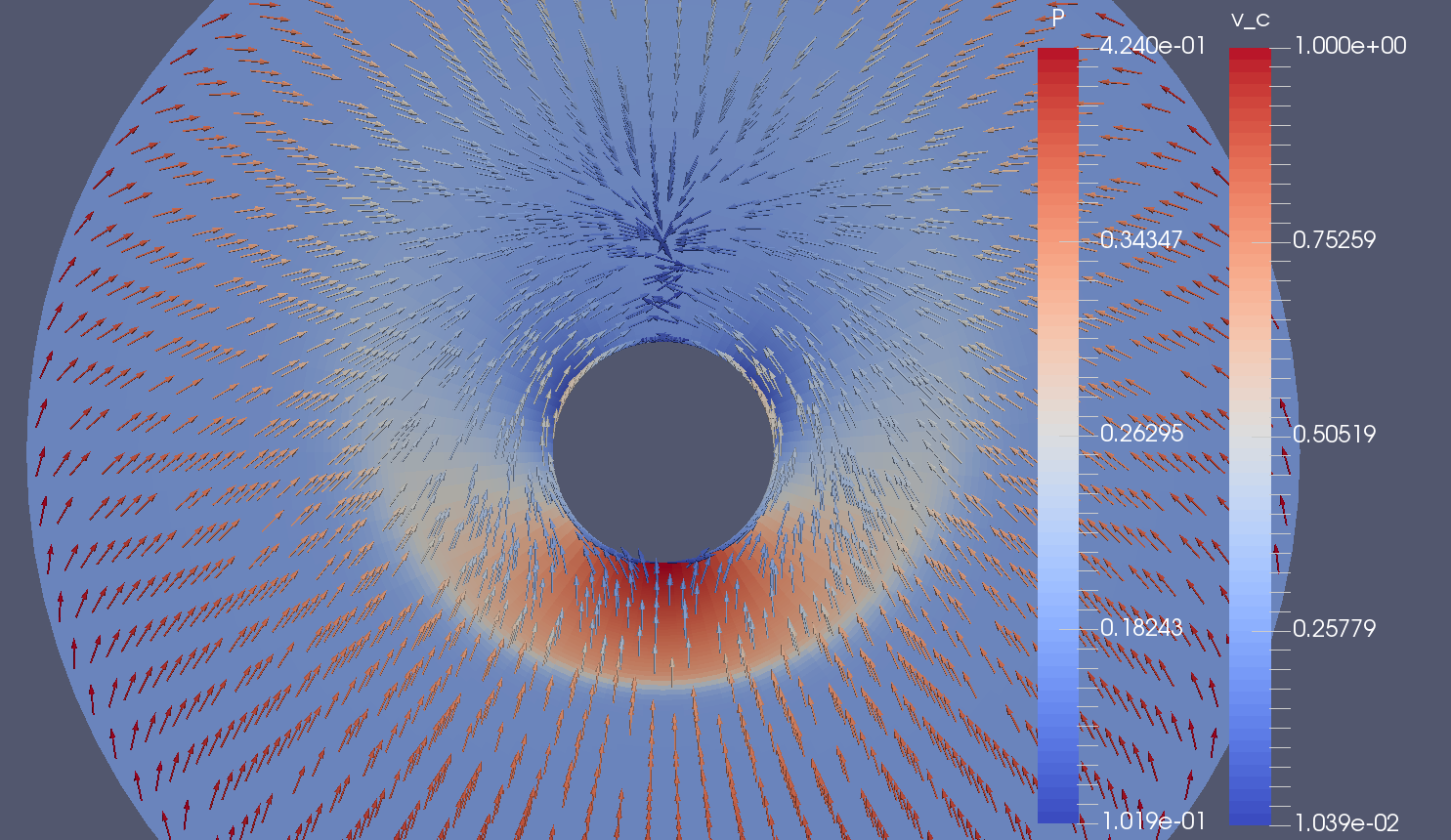}
    \caption{10 degree half angle cone at 20 degrees angle of attack and Mach 2. Pressure field is shown along with the crossflow velocity.}
    \label{fig:M2_AoA20}
\end{figure}

\begin{figure}
    \centering
    \includegraphics[scale=0.3]{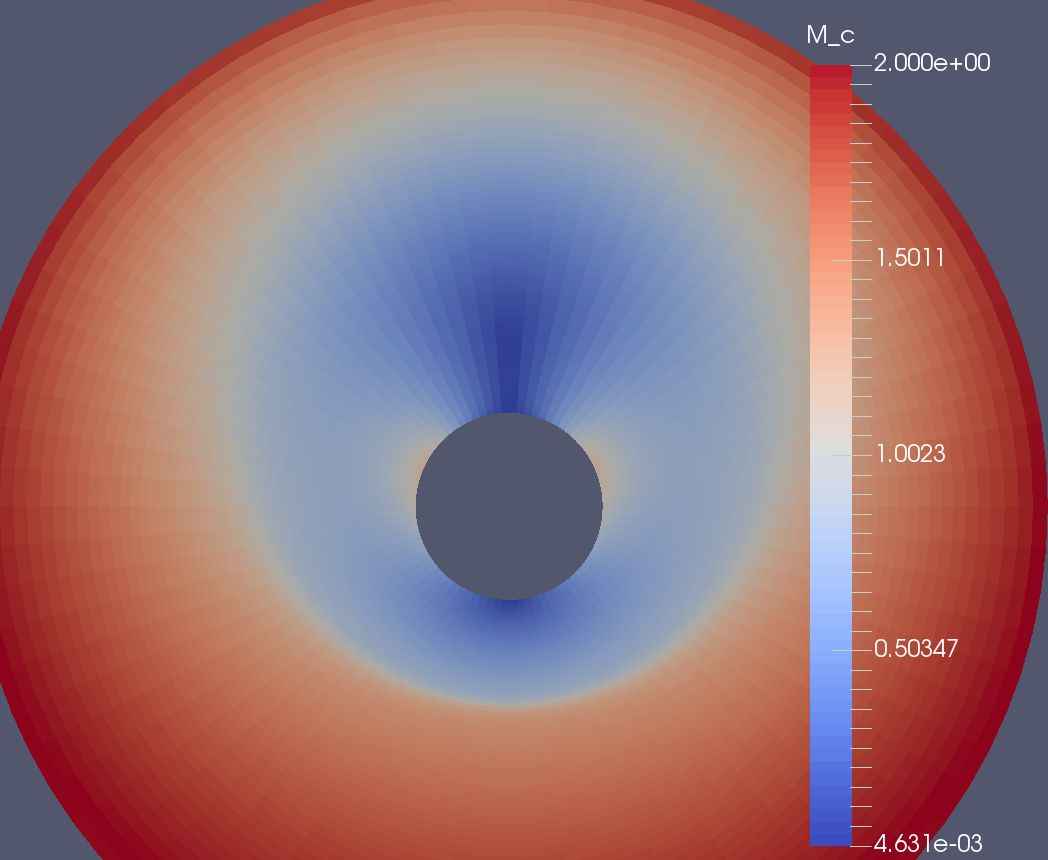}
    \caption{10 degree half angle cone at 20 degrees angle of attack and Mach 2. crossflow Mach number is displayed.}
    \label{fig:M2_AoA20_Mc}
\end{figure}

A natural extension is to consider an elliptic cone in place of a circular one. These results are shown in Figures \ref{fig:Ellipse} and \ref{fig:Ellipse_Mc}. The behavior is qualitatively similar to the case of the cone, but with more accentuated features.

\begin{figure}
    \centering
    \includegraphics[scale=0.225]{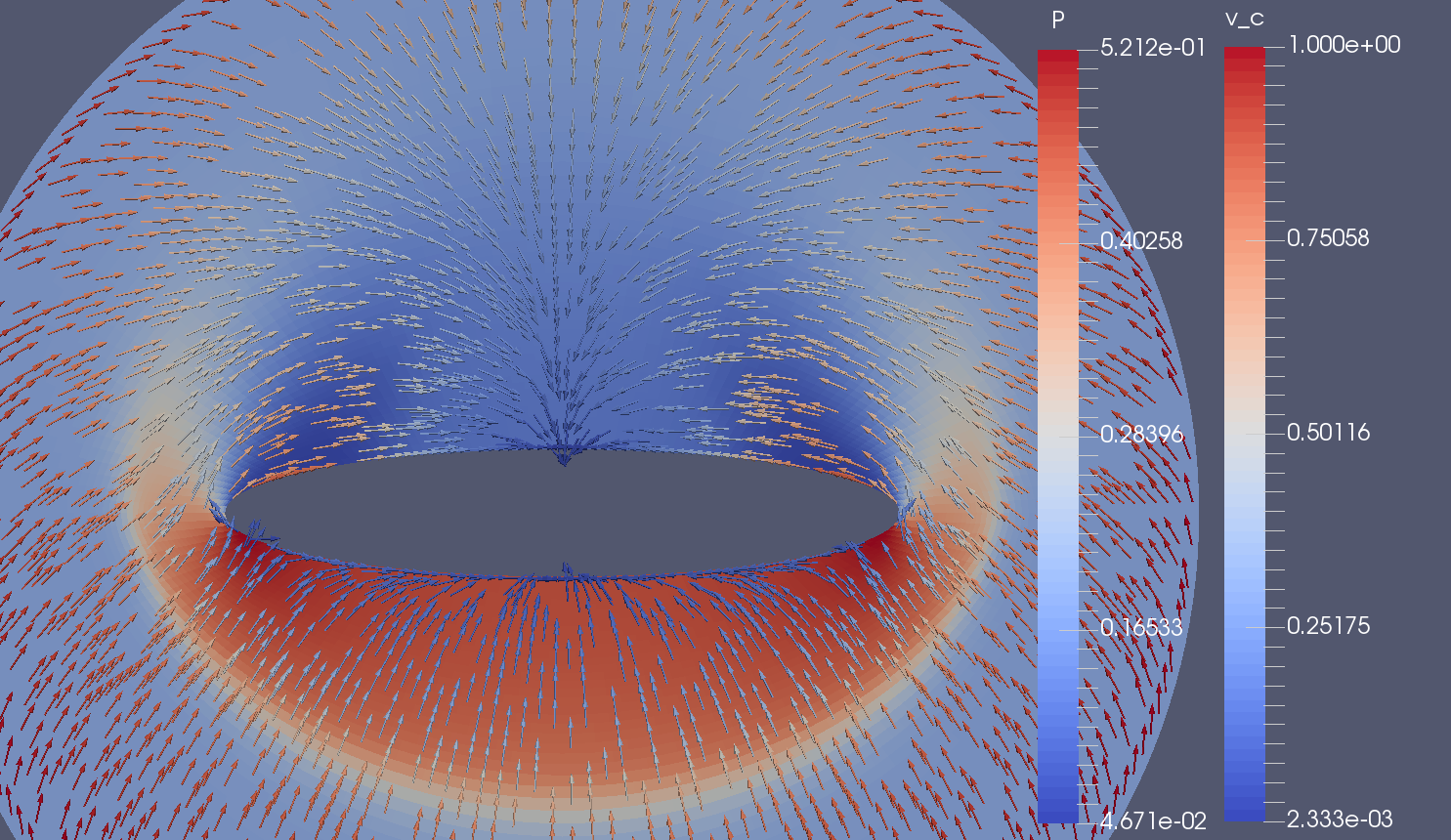}
    \caption{Elliptic cone at 20 degrees angle of attack and Mach 2. Pressure field is shown along with the crossflow velocity. Mesh is 160 elements in the $\xi^1$ direction and 50 elements in the $\xi^2$ direction.}
    \label{fig:Ellipse}
\end{figure}

\begin{figure}
    \centering
    \includegraphics[scale=0.275]{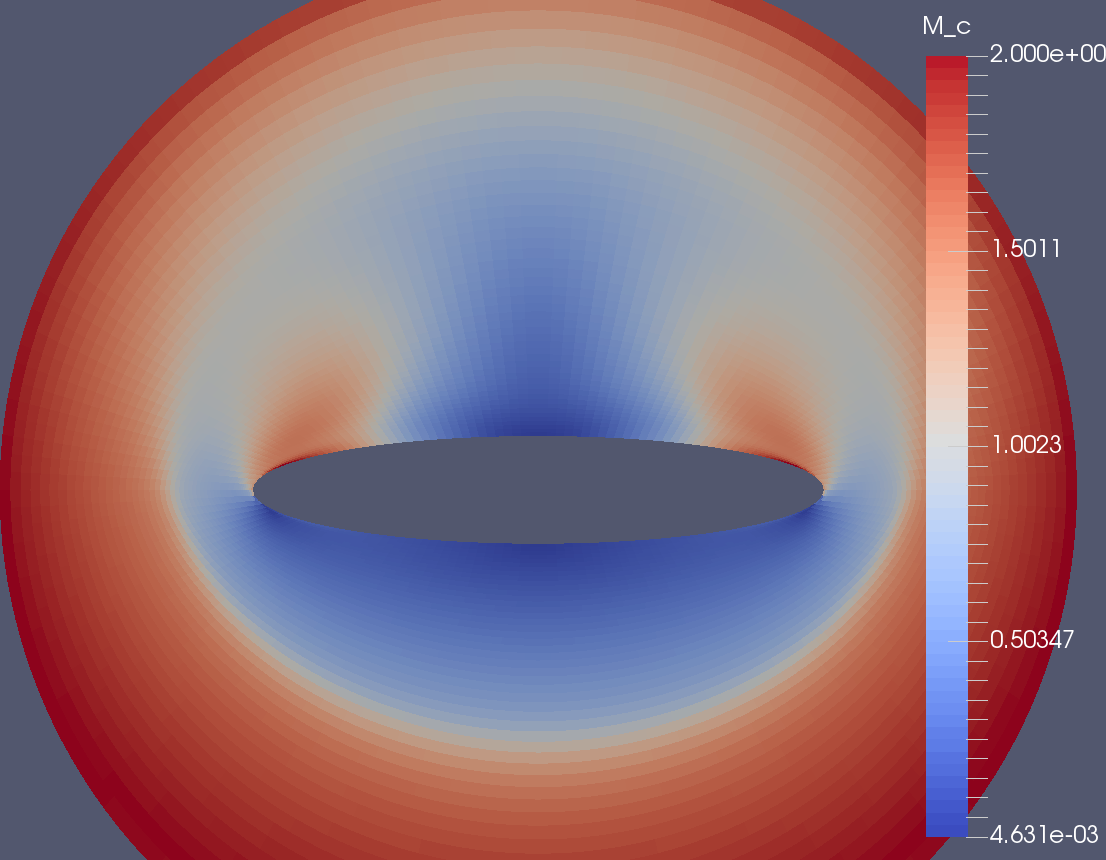}
    \caption{Elliptic cone at 20 degrees angle of attack and Mach 2. crossflow Mach number is displayed.}
    \label{fig:Ellipse_Mc}
\end{figure}

So far, all these results are consistent with the expected behavior of this flow problem based on previous work of Ferri, Sritharan, and others. There is naturally motivation to consider more irregular shapes. To this end, we consider the case of Figure \ref{fig:Fighter} which shows the flow field around a rough outline of the cross section of a fighter jet. This demonstrates the method's ability to handle more complex geometries and flow solutions.

\begin{figure}
    \centering
    \includegraphics[scale=0.225]{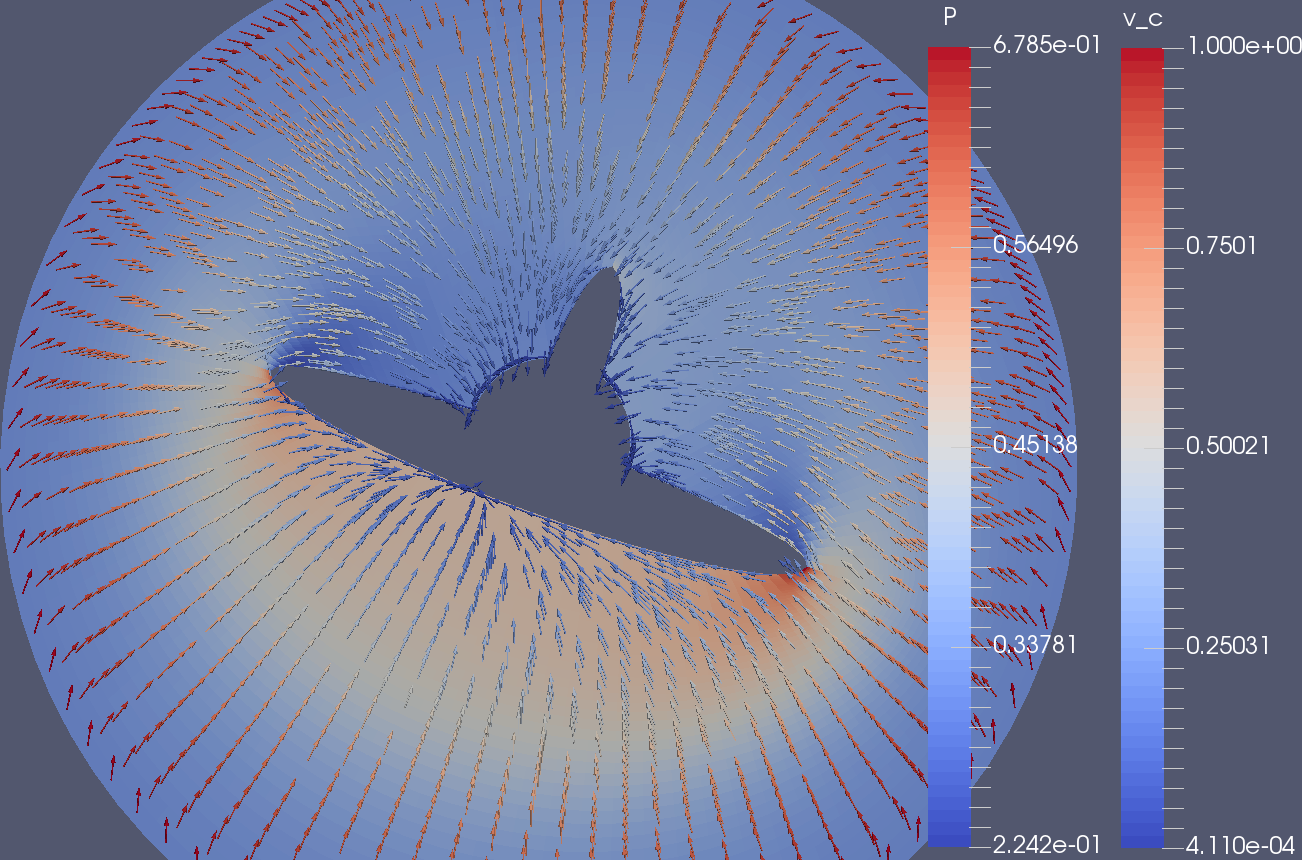}
    \caption{Rough outline of aircraft at 20 degrees of roll, 10 degrees angle of attack, and Mach 1.5. Pressure field is shown along with the crossflow velocity. Mesh is 120 elements in the $\xi^1$ direction and 50 elements in the $\xi^2$ direction.}
    \label{fig:Fighter}
\end{figure}

\begin{figure}
    \centering
    \includegraphics[scale=0.225]{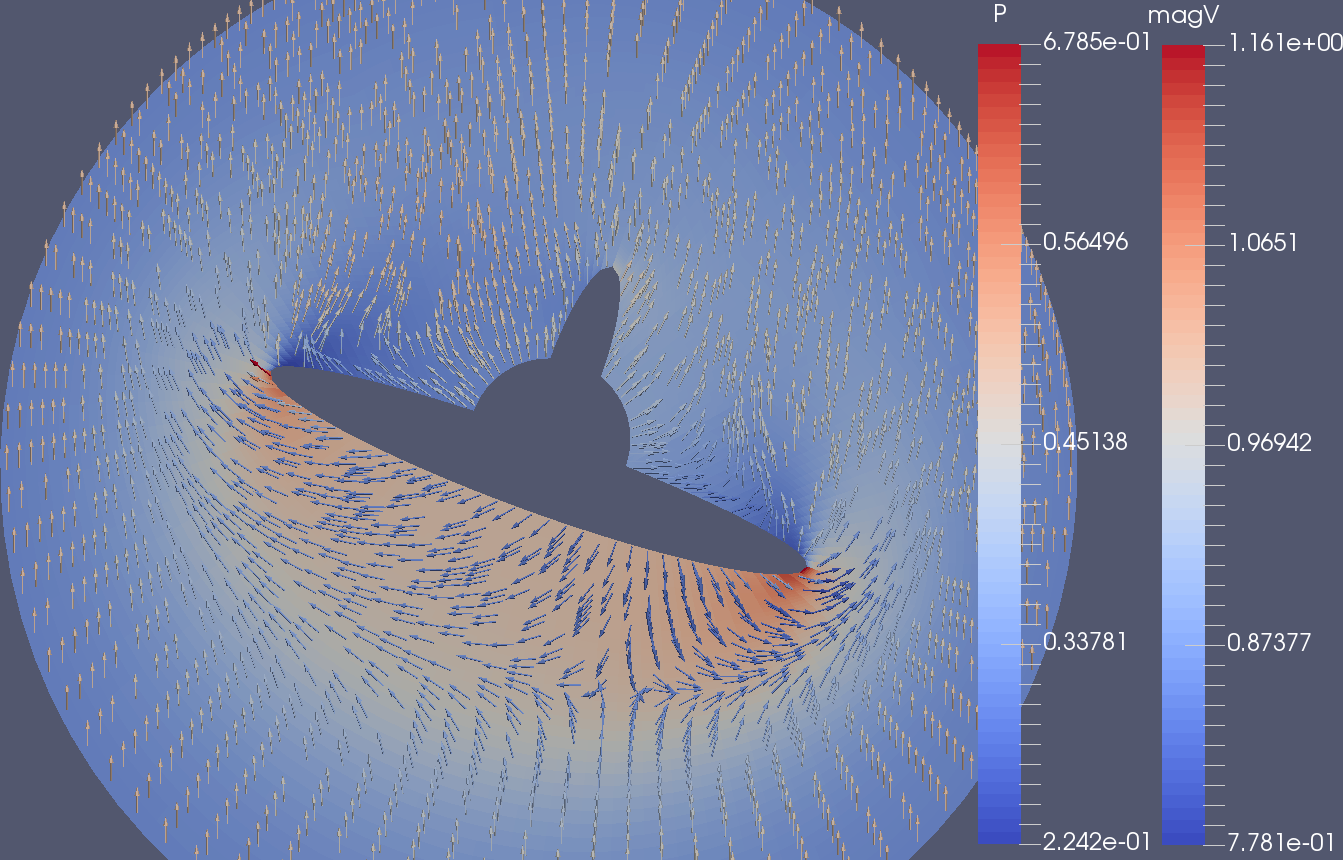}
    \caption{Rough outline of aircraft at 20 degrees of roll, 10 degrees angle of attack, and Mach 1.5. Pressure field is shown along with the velocity projected onto the XY plane instead of onto the surface of the sphere.}
    \label{fig:FighterXY}
\end{figure}

Any function of the solution variables can be computed and displayed. This includes different views of the velocity field. It is most natural to view the velocity field projected onto the surface of the sphere, but it may be insightful to view the components from a different perspective. In Figure \ref{fig:FighterXY}, the velocity field has been projected onto the XY plane and highlights behaviors of the solution which maybe were not apparent in Figure \ref{fig:Fighter}.

\subsection{MHD results}

We now consider the case of a free stream containing a magnetic field. Errors for the examples in this section were higher than for the Euler case, with the $L_2$ norm of the residual being of order $1$ and $L_\infty$ norm of the residual being of order $10^{-1}$. These errors are probably too high to give good qualitative results, and a few erroneous artifacts can be seen in the following examples. The increase in error is likely related to the divergenceless constraints applied to the solution which are known to only be truly consistent in the limit of zero mesh spacing. The solutions were however qualitatively consistent with theory and demonstrate true behaviors of the system. Further investigation is required to develop a discrete expression which can be better satisfied.

We expect to see some identifiable, qualitative differences in the MHD solutions compared to the non-conducting counterparts. The Lorentz force naturally opposes the motion of a conductor across magnetic field lines, and this force is proportional to the velocity of the conductor. As a result, MHD flows tend to have flattened velocity gradients compared to equivalent non-conducting flows. This behavior results in greater shock wave angles and redistribution of pressure and temperature fields \cite{NumSimMHD,ExpResults}. The effects are also directional since the Lorentz force acts perpendicular to the magnetic field. In the case of ideal magnetohydrodynamic flows, there is the ``frozen-in'' property which states that the fluid cannot cross magnetic field lines, but is free to move along them \cite{MHDFlowPastBodies}. All of these behaviors can be observed in the following figures.

Figures \ref{fig:MHD_Ref}, \ref{fig:MHD_UP}, and \ref{fig:MHD_SA} demonstrate an increase in shock wave angle with the addition of a magnetic field. Figure \ref{fig:MHD_Ref} shows the same 10 degree half angle cone at 20 degrees angle of attack and Mach 2 presented above with no magnetic field present to serve as a reference. The mesh used had dimensions 40 cells in the $\xi^1$ direction and 80 in the $\xi^2$ direction for a total of 3200 elements and thus 25,600 variables. Two different orientations of magnetic fields were imposed both with magnitudes of $0.4$. In Figure \ref{fig:MHD_UP} the magnetic field is imposed in the ``upward perpendicular'' direction which means that the magnetic field was perpendicular to the incoming flow stream in the upward direction. The Cartesian components of the magnetic field are given by:

\begin{equation}
    \pmb{B}_{*\infty} = \left[ \begin{smallmatrix} 0 \\ \cos AoA  \\ -\sin AoA \end{smallmatrix} \right]
\end{equation}

which mostly points in the $\hat{y}$ direction but is kept perpendicular to the free stream as the angle of attack is increased. In Figure \ref{fig:MHD_SA}, the magnetic field was stream-aligned which means that it was imposed in the same direction as the free stream velocity.

In both cases involving electromagnetic interaction, the shock wave angle can be seen to increase all around the circumference of the cone. It is clear in Figure \ref{fig:MHD_UP} that the angle of the shock wave increases more around the top of the cone than around the bottom . This is likely due to the velocity having greater magnitude around the top and sides than near the crossflow stagnation region, and so the effect of the Lorentz force is greater.

\begin{figure}
    \centering
    \includegraphics[scale=0.275]{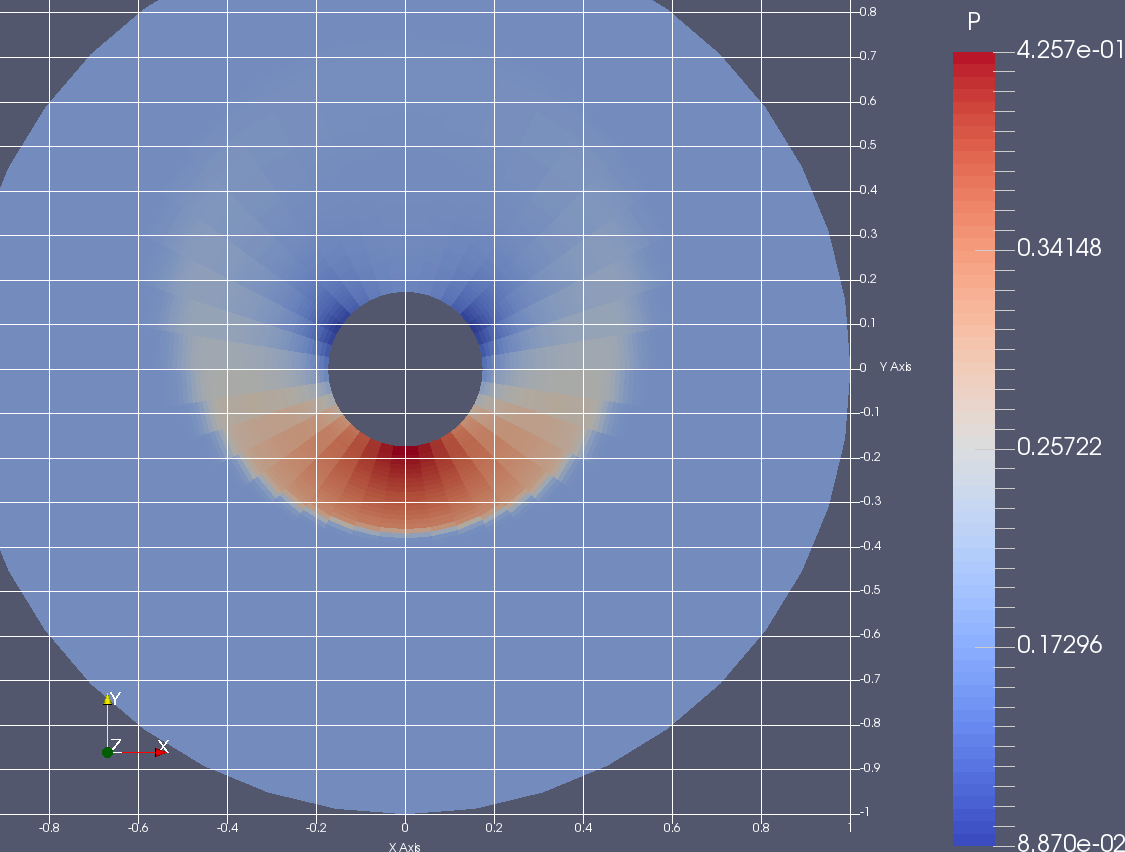}
    \caption{10 degree half angle cone at 20 degrees angle of attack and Mach 2. No magnetic field is present to provide a reference of the shock wave angle and strength. Results were achieved using the MHD solver with the free stream magnetic field set to zero. The final $L_2$ norm of the residual was less than $10^{-9}$.}
    \label{fig:MHD_Ref}
\end{figure}

\begin{figure}
    \centering
    \includegraphics[scale=0.275]{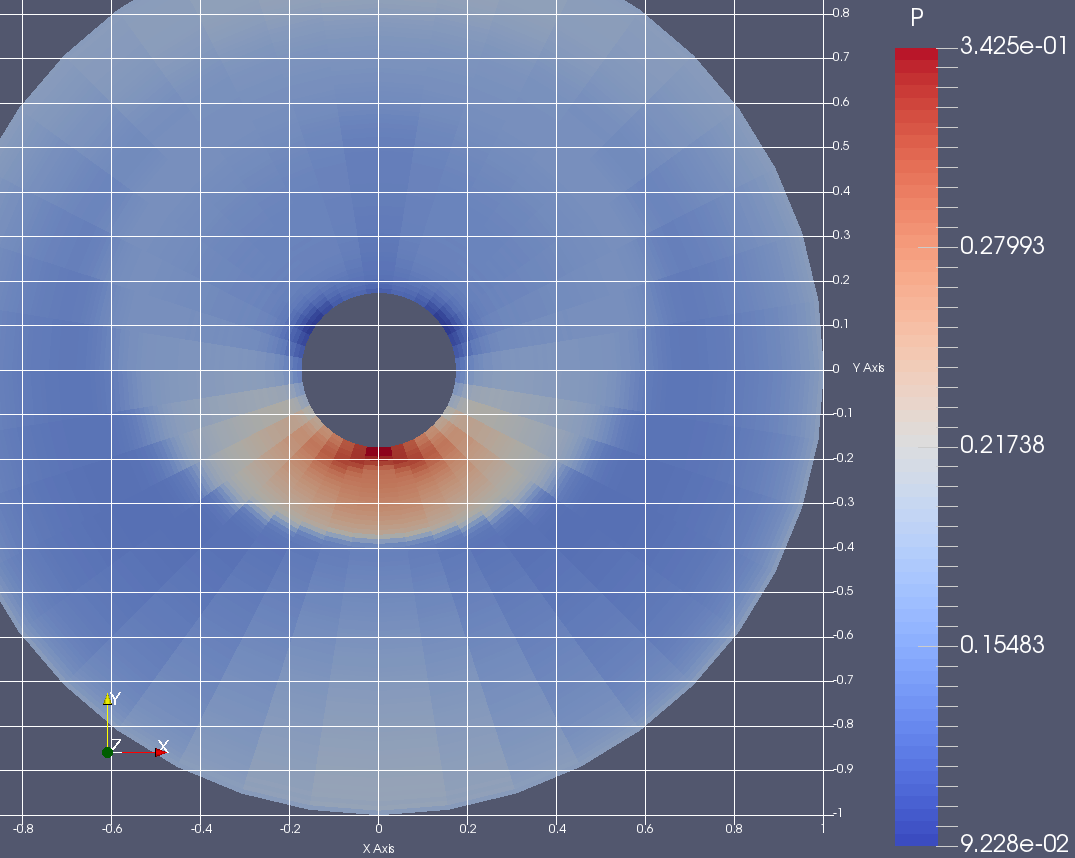}
    \caption{Magnetic field was imposed upward perpendicular to the incoming flow stream with a magnitude of 0.4. Unevenness of the pressure field outside the shock wave is likely due to numerical error.}
    \label{fig:MHD_UP}
\end{figure}

\begin{figure}
    \centering
    \includegraphics[scale=0.275]{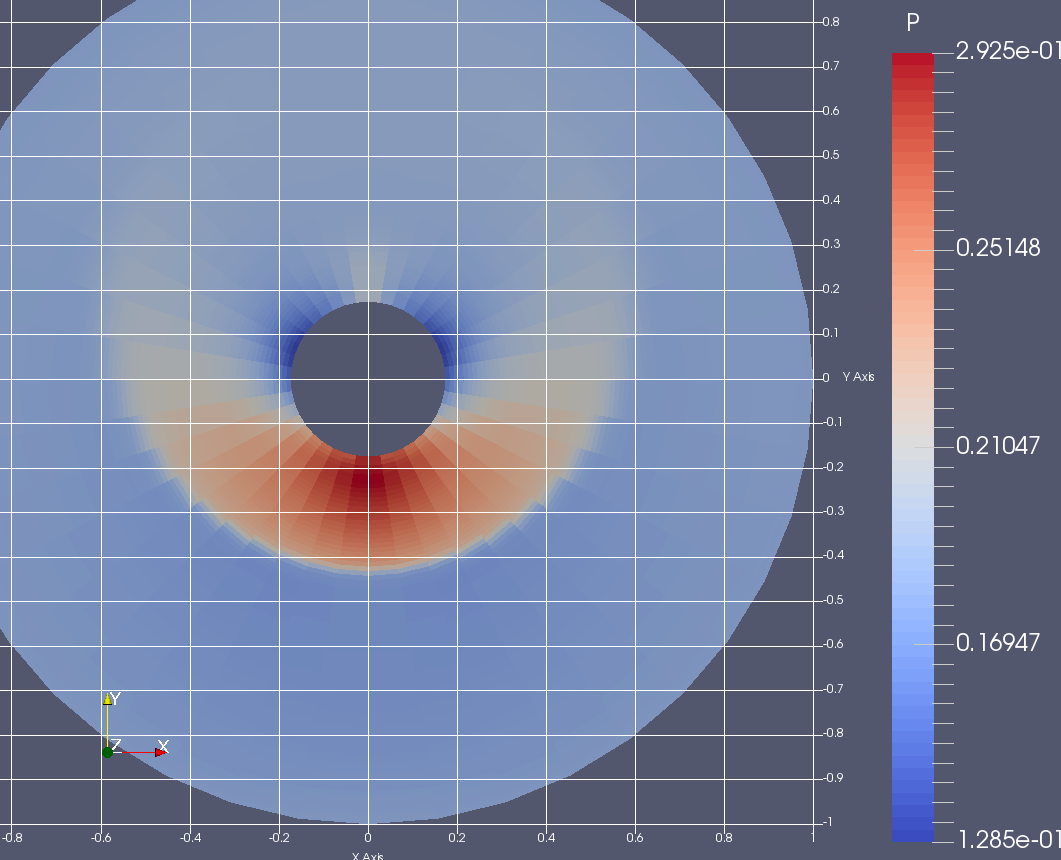}
    \caption{Magnetic field was aligned with the incoming flow stream and given a magnitude of 0.4.}
    \label{fig:MHD_SA}
\end{figure}

To illustrate the ``frozen-in'' property of Ideal MHD flows, we also considered the case of an asymmetric magnetic field. The following examples involve the same 10 degree cone at 20 degrees angle of attack and Mach 2. The magnetic field was imposed at 30 degrees counter-clockwise from the $y$ axis at varying angles from the cone ($z$) axis with a magnitude of $0.1$ as depicted in Figure \ref{fig:BinfDiagram}. A mesh with 64 elements in each coordinate direction was used. As the angle off the cone axis increased, it can be seen in Figure \ref{fig:MHD64SurfPress} that the maximum pressure region on the surface of the cone is rotated couner-clockwise.

\begin{figure}
    \centering
    \includegraphics[scale=0.35]{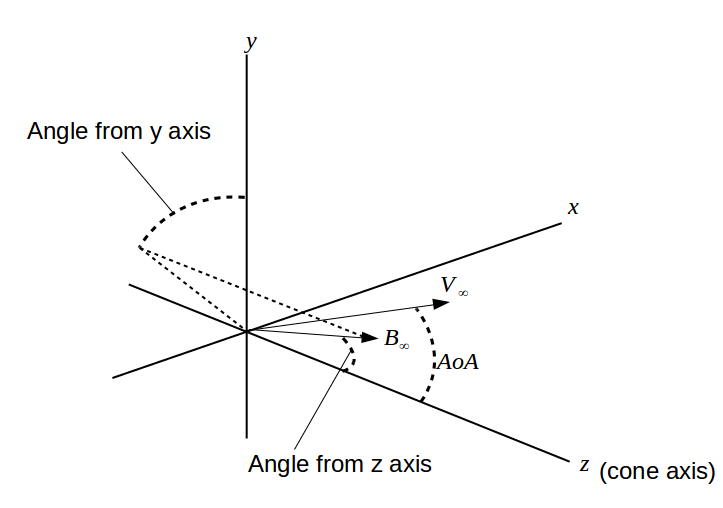}
    \caption{Orientation of free stream magnetic field.}
    \label{fig:BinfDiagram}
\end{figure}

\begin{figure}
    \centering
    \includegraphics[scale=0.35]{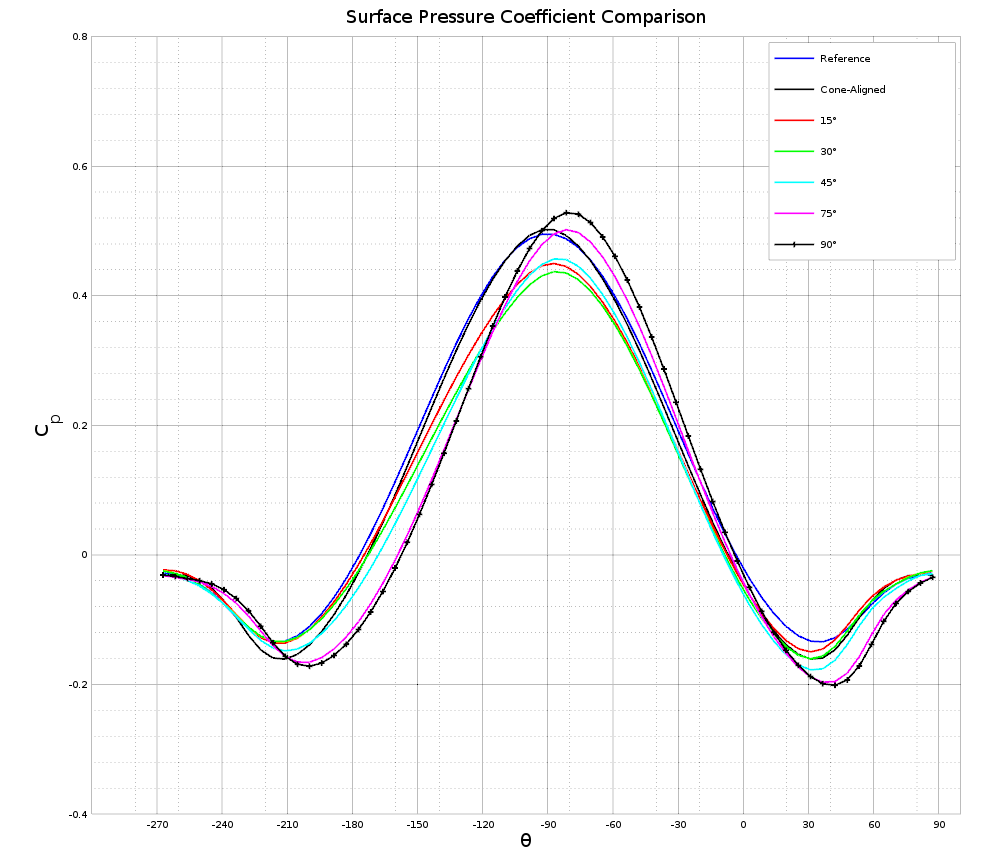}
    \caption{Cone surface pressure coefficient 10 degree cone, 20 degrees angle of attack and Mach 2. Magnetic field is imposed 30 degrees off of the $y$ axis at the angle specified from the cone's axis.}
    \label{fig:MHD64SurfPress}
\end{figure}

The pressure and velocity fields for the 90 degree case is shown in Figure \ref{fig:90degVcP}. The maximum pressure region has clearly been rotated, as has the convergence point of the crossflow stream lines. This is consistent with the idea that the velocity is allowed to flow along the magnetic field lines, but is resisted in flowing across them. Likewise, we expect to see the magnetic field distorted by the flow of the flow of the fluid which is shown in the next two figures.

\begin{figure}
    \centering
    \includegraphics[scale=.3]{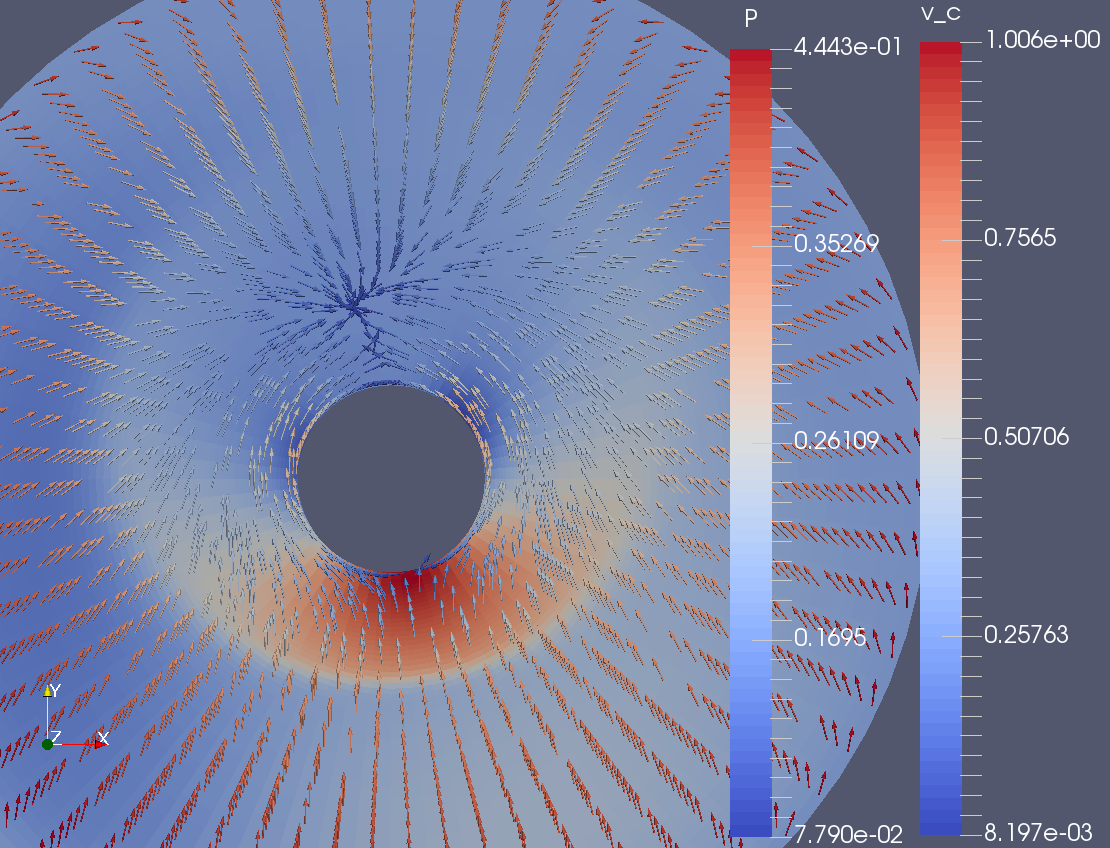}
    \caption{10 degree cone, 20 degrees angle of attack and Mach 2. Magnetic field is imposed 30 degrees off of the $y$ axis and 90 degrees from the cone's axis. Cross flow velocity and pressure are shown.}
    \label{fig:90degVcP}
\end{figure}

Figure \ref{fig:90degBcP} shows the magnetic field projected onto the surface of the sphere along with the pressure field for the case of the magnetic field being 30 degrees off the $y$ axis and 90 degrees off the cone's axis, and Figure \ref{fig:ConeAlignBcP} shows the the same for the case of the magnetic field being aligned with the cone's axis (0 degrees off of its axis). It is particularly visible in Figure \ref{fig:ConeAlignBcP}, the case of cone-aligned magnetic field, that the magnetic field is constricted when the gas is compressed. The cross flow magnetic field near the cone's surface points from low density regions to higher density regions.

\begin{figure}
    \centering
    \includegraphics[scale=.3]{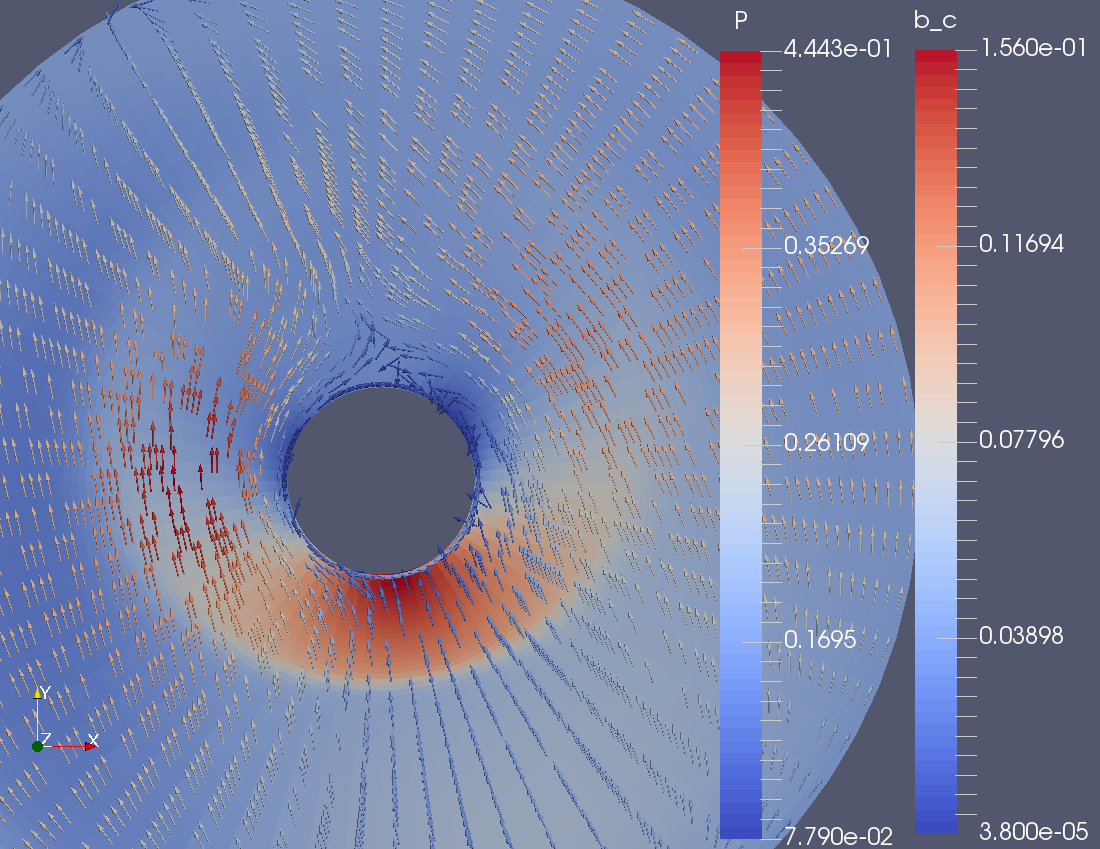}
    \caption{10 degree cone, 20 degrees angle of attack and Mach 2. Magnetic field is imposed 30 degrees off of the $y$ axis and 90 degrees from the cone's axis. Cross flow magnetic field and pressure are shown.}
    \label{fig:90degBcP}
\end{figure}

\begin{figure}
    \centering
    \includegraphics[scale=.3]{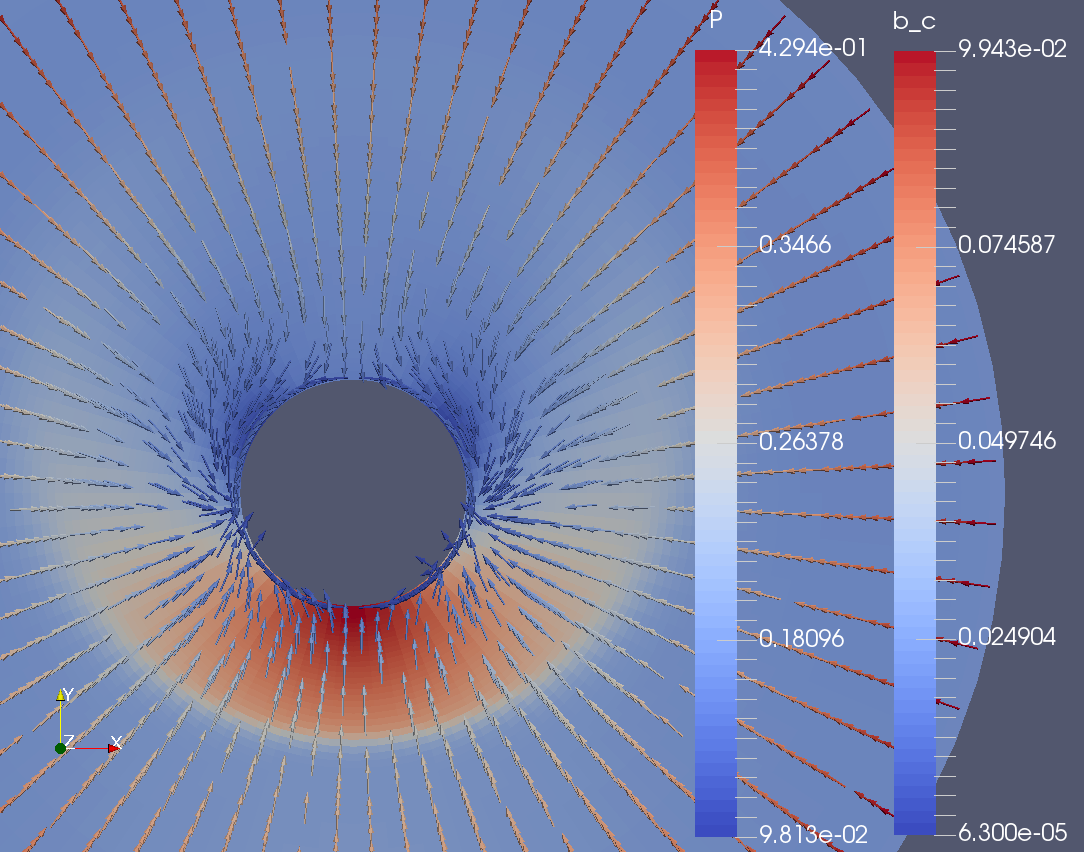}
    \caption{10 degree cone, 20 degrees angle of attack and Mach 2. Magnetic field is imposed along the cone's axis. Cross flow magnetic field and pressure are shown.}
    \label{fig:ConeAlignBcP}
\end{figure}

Though the behaviors demonstrated so far are consistent with past work on the subject, there are some artifacts which likely do not belong. Clearly visible in Figure \ref{fig:MHD_UP} is some unevenness of the pressure field outside the bow shock. This is visible in Figures \ref{fig:90degVcP} and \ref{fig:90degBcP} as well though not as prevalent. This is believed to be an artifact of the inconsistency of the divergenceless constraint preventing desirable convergence from being achieved. This unevenness tended to occur the more perpendicular the magnetic field was to the free stream velocity, which is when the Lorentz force effects would be stronger. Despite this, the solutions produced did still exhibit behaviors consistent with MHD theory which demonstrates the validity of the overall method, that is the discrete covariant derivatives and the solution algorithm.

\section{Conclusion}

A numerical scheme has been developed which solves the conical Euler and MHD equations. This method relies heavily on the discrete Christoffel symbols which account for the curvature of the manifold in differential expressions. The discretization of the conical equations transforms in the appropriate tensorial manner and is exactly satisfied by a broad class of known solutions. A standard Newton's method was used to solve the system of nonlinear equations and produced results that were consistent with theory and prior numerical and experimental work. Better convergence is desired for the MHD case, but will likely require a more involved discretization in order to be achieved.

\section*{Acknowledgement}

This research was supported in part by an appointment to the Student Research Participation Program at the U.S. Air Force Institute of Technology administered by the Oak Ridge Institute for Science and Education through an interagency agreement between the U.S. Department of Energy and USAFIT.

\section*{References}

\bibliographystyle{elsarticle-num}
\bibliography{references}

\begin{thebibliography}{10}
\expandafter\ifx\csname url\endcsname\relax
  \def\url#1{\texttt{#1}}\fi
\expandafter\ifx\csname urlprefix\endcsname\relax\def\urlprefix{URL }\fi
\expandafter\ifx\csname href\endcsname\relax
  \def\href#1#2{#2} \def\path#1{#1}\fi

\bibitem{ConEuler}
I.~Holloway, S.~Sritharan, Compressible euler equations on a sphere and
  elliptic-hyperbolic property, submitted for publication (2019).

\bibitem{ConMHD}
I.~Holloway, S.~Sritharan, Ideal magnetohydrodynamic equations on a sphere and
  elliptic-hyperbolic property, submitted for publication (2019).

\bibitem{Guan}
J.~Guan, S.~Sritharan, A hyperbolic-elliptic type conservation law on unit
  sphere that arises in delta wing aerodynamics, International Journal of
  Contemporary Mathematical Sciences 3~(15) (2008) 721--737 (01 2008).

\bibitem{SriFAMethod}
S.~S. Sritharan, A.~R.~Seebass, Finite area method for nonlinear supersonic
  conical flows, AIAA Journal 22 (1984) 226--233 (1984).
\newblock \href {https://doi.org/10.2514/3.8372} {\path{doi:10.2514/3.8372}}.

\bibitem{balbas}
J.~Balb\'as, S.~Karni, A central scheme for shallow water flows along channels
  with irregular geometry, ESAIM: Mathematical Modelling and Numerical Analysis
  - Mod\'elisation Math\'ematique et Analyse Num\'erique 43~(2) (2009) 333--351
  (2009).
\newblock \href {https://doi.org/10.1051/m2an:2008050}
  {\path{doi:10.1051/m2an:2008050}}.

\bibitem{BalancedNT}
N.~{\v{C}}rnjari{\'{c}}-{\v{Z}}ic, S.~Vukovi{\'{c}}, L.~Sopta, Balanced central
  nt schemes for the shallow water equations, in: Z.~Drma{\v{c}},
  M.~Maru{\v{s}}i{\'{c}}, Z.~Tutek (Eds.), Proceedings of the Conference on
  Applied Mathematics and Scientific Computing, Springer Netherlands,
  Dordrecht, 2005, pp. 171--185 (2005).

\bibitem{jin_2001}
S.~Jin, A steady-state capturing method for hyperbolic systems with geometrical
  source terms, ESAIM: Mathematical Modelling and Numerical Analysis 35~(4)
  (2001) 631–645 (2001).
\newblock \href {https://doi.org/10.1051/m2an:2001130}
  {\path{doi:10.1051/m2an:2001130}}.

\bibitem{kurganov2007}
A.~Kurganov, G.~Petrova,
  \href{https://projecteuclid.org:443/euclid.cms/1175797625}{A second-order
  well-balanced positivity preserving central-upwind scheme for the
  saint-venant system}, Commun. Math. Sci. 5~(1) (2007) 133--160 (03 2007).
\newline\urlprefix\url{https://projecteuclid.org:443/euclid.cms/1175797625}

\bibitem{Man_Lev}
J.~A. Rossmanith, D.~S. Bale, R.~J. LeVeque,
  \href{http://www.sciencedirect.com/science/article/pii/S002199910400110X}{A
  wave propagation algorithm for hyperbolic systems on curved manifolds},
  Journal of Computational Physics 199~(2) (2004) 631 -- 662 (2004).
\newblock \href {https://doi.org/https://doi.org/10.1016/j.jcp.2004.03.002}
  {\path{doi:https://doi.org/10.1016/j.jcp.2004.03.002}}.
\newline\urlprefix\url{http://www.sciencedirect.com/science/article/pii/S002199910400110X}

\bibitem{Kurg}
A.~Kurganov, E.~Tadmor, New high-resolution central schemes for nonlinear
  conservation laws and convection–diffusion equations, Journal of
  Computational Physics 160 (2000) 241--282 (05 2000).
\newblock \href {https://doi.org/10.1006/jcph.2000.6459}
  {\path{doi:10.1006/jcph.2000.6459}}.

\bibitem{Lovelock}
D.~Lovelock, H.~Rund,
  \href{https://books.google.com/books?id=ikPQFqxcyfMC}{Tensors, Differential
  Forms, and Variational Principles}, Dover Books on Mathematics Series, Dover
  Publications, New York, 1989 (1989).
\newline\urlprefix\url{https://books.google.com/books?id=ikPQFqxcyfMC}

\bibitem{MHDFlowPastBodies}
W.~Sears, E.~{Resler Jr.}, Magneto-aerodynamic flow past bodies., Advances in
  Applied Mechanics 8 (1964) 1 -- 68 (1964).

\bibitem{sriThesis}
S.~S. Sritharan, Nonlinear aerodynamics of conical delta wings, Ph.D. thesis,
  Applied Mathematics, University of Arizona (1982).

\bibitem{ShockFreeCrossFlow}
S.~S. Sritharan, Delta wings with shock-free cross flow, Quarterly of Applied
  Mathematics 43~(3) (1985) 275--286 (1985).

\bibitem{NASA_Tables}
J.~Sims, Tables for supersonic flow around right circular cones at zero angle
  of attack, Tech. Rep. 3004, National Aeronautics and Space Administration
  (1964).

\bibitem{Siclari}
M.~J. Siclari, Investigation of crossflow shocks on delta wings in supersonic
  flow., AIAA Journal 18 (1980) 85 -- 93 (1980).

\bibitem{NASA_con}
A.~Ferri, Supersonic flow around circular cones at angle of attack, Tech. Rep.
  2651, National Advisory Committee for Aeronautics (1950).

\bibitem{NumSimMHD}
K.~Hoffmann, H.-M. Damevin, J.-F. Dietiker, Numerical simulation of hypersonic
  magnetohydrodynamic flows, in: 31st Plasmadynamics and Lasers Conference,
  2000 (06 2000).
\newblock \href {https://doi.org/10.2514/6.2000-2259}
  {\path{doi:10.2514/6.2000-2259}}.

\bibitem{ExpResults}
S.~Kranc, M.~C. Yuen, A.~Cambel, Experimental investigation of
  magnetoaerodynamic flow around blunt bodies, Tech. Rep. 1393, Prepared by
  Northwestern University Evanston, IL for National Aeronautics and Space
  Administration (1969).

\end{thebibliography}

\end{document}